\DeclarePairedDelimiter\abs{|}{|}
\DeclarePairedDelimiter\set{\{}{\}}
\newtheorem{thm}{Theorem}[section]
\newtheorem{lem}[thm]{Lemma}
\newtheorem{cor}[thm]{Corollary}
\newtheorem{conj}[thm]{Conjecture}
\theoremstyle{definition}
\newtheorem{defn}[thm]{Definition}
\newtheorem{rem}[thm]{Remark}
\newtheorem{example}[thm]{Example}
\newtheorem{prop}[thm]{Proposition}
\newcommand{\A}{\mathcal{A}}
\newcommand{\F}{\mathcal{F}}
\newcommand{\W}{\mathcal{W}}
\newcommand{\diam}{\mathord{\diamond}}
\newcommand{\p}[1]{{\textcolor{red}{#1}}}
\begin{document}

\title{Universal Partial Tori}
\author{William D. Carey, Matthew David Kearney, Rachel Kirsch, and Stefan Popescu}
\date{January 24, 2025}
\maketitle{}

\abstract{
    A De Bruijn cycle is a cyclic sequence in which every word of length $n$ over an alphabet $\A$ appears exactly once. De Bruijn tori are a two-dimensional analogue. Motivated by recent progress on universal partial cycles and words, which shorten De Bruijn cycles using a wildcard character, we introduce universal partial tori and matrices. We find them computationally and construct infinitely many of them using one-dimensional variants of universal cycles, including a new variant called a universal partial family.
}

\section{Introduction}\label{sec:intro}
    Universal cycles represent collections of combinatorial objects compactly, in a cyclic, overlapping way. For example, $0000100110101111$ is a De Bruijn cycle (a universal cycle for words) because it contains every binary word of length $4$ exactly once.
    
    In the last several years, compression of universal cycles has been explored, including universal cycles for words \cite{chen2017universal, Fillmore2023, Goeckner2018}, permutations \cite{KLSS23, KPV19}, surjective functions \cite{ChenKitaev2020UniversalPartialWords}, set partitions \cite{ChenKitaev2020UniversalPartialWords}, and graphs \cite{KSS23}. For example, $001\diam110\diam$ is a universal partial cycle (upcycle) because it covers every binary word of length $4$ exactly once, where the wildcard character $\diam$ can represent either $0$ or $1$, and it is only half the length of a De Bruijn cycle for the same set of words.

    Separately, De Bruijn cycles have been generalized to two dimensions. A De Bruijn torus for $\A^{w\times \ell}$, also known as a perfect map, is a torus (a matrix considered to wrap around both horizontally and vertically) in which each of the $w \times \ell$ matrices over the alphabet $\A$ appears exactly once. De Bruijn tori have been studied extensively \cite{FFMS85,Etzion1988,HI93,Kreitzer2024,Paterson94,RS62} and have applications to position sensing \cite{BM93,HS16,MP94}.

    Combining these two lines of research, we introduce the universal partial torus (uptorus), an analogue of the De Bruijn torus that uses a wildcard character $\diam$ and covers each $w \times \ell$ matrix over an alphabet $\A$ exactly once, and its non-cyclic form, the upmatrix. 

    The paper is organized as follows. In \cref{sec:prelim}, we define the relevant variants of universal cycles and related terms. 
    In Sections \ref{sec:results-comp-upmatrices} and \ref{sec:results-constructed-upmatrices} we explore upmatrices, identifying many examples computationally and laying out a construction that uses universal partial words to produce infinitely many of them.
    
    Uptori appear to be rarer than upmatrices. In \cref{sec:results-comp-uptori} we identify a single minimal uptorus computationally. Nevertheless, in \cref{sec:results-constructed-uptori} we provide a construction that produces infinitely many uptori. This construction takes as its input an upcycle and a De Bruijn cycle.
    
    We then generalize that construction following the pattern laid down by Kreitzer, Nica, and Pereira \cite{Kreitzer2024} for constructing De Bruijn tori. In \cref{sec:upfamilies}, we replace the upcycle with what we call a universal partial family (upfamily) and the De Bruijn cycle with an alternating De Bruijn cycle. This construction produces uptori for the same alphabet and subarray size, but with different dimensions from our first construction. Upfamilies are a variant of De Bruijn families (also called perfect factors of the De Bruijn graph \cite{Paterson1995}) in which a wildcard symbol is used. We show examples of and make a conjecture about producing upfamilies by slicing upcycles. Using such upfamilies in the uptori construction yields uptori which have the same number of $\diam$'s in each $w\times \ell$ subarray. 
    
    In \cref{sec:quasifamilies} we introduce universal partial quasi-families (UPQFs), a relaxation of universal partial families in which the cycles may have different lengths. While UPQFs cannot be used in our uptorus construction directly, in \cref{sec:liftedupfamilies} we show that UPQFs can be lifted to upfamilies which in turn can be used to construct a more irregular type of uptorus, in which some $w \times \ell$ subarrays have different numbers of $\diam$'s.

    Finally, in \cref{sec:open} we lay out some unanswered questions prompted by this research.

\section{Preliminaries}\label{sec:prelim}

    A \emph{universal word} for $\A^n$ is a word over the alphabet $\A$ that contains every word in $\A^n$ exactly once as a substring, i.e., a non-cyclic form of a De Bruijn cycle. For example, $0120221100$ is a universal word for $\set{0,1,2}^2$. Universal words exist for all $\A^n$. A \emph{partial word} over an alphabet $\A$ is a sequence of symbols over $\A\cup\set{\diam}$, where $\diam \notin \A$. The $\diam$ is thought of as a wildcard symbol which can correspond to any letter of the alphabet. We say that a partial word $u$ \emph{covers} a partial word $v$ if $v$ can be obtained from a substring of $u$ by replacing some subset of the diamonds with letters from $\A$. A \emph{universal partial word} (or \emph{upword}) for $\A^n$ is a partial word that covers every word in $\A^n$ exactly once \cite{chen2017universal}. A \emph{universal partial cycle} (or \emph{upcycle}) for $\A^n$ is a cyclic partial word which covers every word in $\A^n$ exactly once.  For example, $001\diam110\diam$ is an upcycle for $\set{0,1}^4$ because it covers every binary word of length $4$ exactly once (wrapping around because the partial word is cyclic).

    We analogously define a \emph{partial matrix} over an alphabet $\A$ as a rectangular matrix of symbols in $\A \cup \set{\diam}$, where $\diam \notin \A$. 
    A \emph{torus} over an alphabet $\A$ is a matrix of symbols in $\A$ where both the rows and columns are considered cyclically. A \emph{partial torus} over $\A$ is a torus over $\A\cup\set{\diam}$, where $\diam \notin \A$, i.e., a partial matrix considered cyclically in both directions. Within each of these objects we consider \emph{subarrays}: 

    \begin{defn}[Subarray]\label{def:subarray}
        A $w \times \ell$ \emph{subarray} $p$ of a (partial) matrix $m$ is a (partial) matrix with $w$ rows and $\ell$ columns such that there exist integers $r$ and $c$ with $p[y,x] = m[y+r,x+c]$ for every $x \in \set{0, \ldots, \ell-1}$ and $y \in \set{0, \ldots, w - 1}$. If $m$ is a (partial) torus, then we read $y+r$ and $x+c$ modularly.
    \end{defn}

    \begin{example}
    The partial matrix $p = \begin{bmatrix}
        0 & 1 & 1\\
        \diam & 1 & \diam\\
    \end{bmatrix}$ is a $2 \times 3$ subarray of 
    $m = \begin{bmatrix}
        0 & 0 & 1 & 1& 0\\
        0 & \diam & 1 & \diam & 0 \\
        1 & 0 & 0 & 1 & 1
    \end{bmatrix}.$ The top-left entry of $p$ is located at $(r,c) = (0,1)$ in $m$, and $p[y,x] = m[y,x+1]$ for every $x \in \set{0,1,2}$ and $y \in \set{0,1}$.
    \end{example}

    We write $\A^{w \times \ell}$ for the set of matrices over $\A$ with $w$ rows and $\ell$ columns. A partial matrix $m$ \emph{covers} each partial matrix which can be obtained by replacing some subset of the $\diam$'s in a subarray of $m$ with letters of $\A$.
    
    \begin{defn}[Universal Partial Matrix]\label{def:upmatrix}
        A \emph{universal partial matrix} (or \emph{upmatrix}) for $\A^{w\times \ell}$ is a partial matrix over the alphabet $\A$ in which every matrix in $\A^{w \times \ell}$ is covered exactly once.
    \end{defn}

    For example,
            \[
                \mu = \begin{bmatrix}
                    0 & 0 & 1 & 1& 0\\
                    0 & \diam & 1 & \diam & 0 \\
                    1 & 0 & 0 & 1 & 1
                \end{bmatrix}
            \]
    is a universal partial matrix for $\A = \set{0,1}^{2 \times 2}$.

    \begin{defn}[Universal Partial Torus]\label{def:uptorus}
        A \emph{universal partial torus} (or \emph{uptorus}) for $\A^{w \times \ell}$ is a partial torus over the alphabet $\A$ in which every matrix in $\A^{w \times \ell}$ is covered exactly once.
    \end{defn}
    
    Goeckner et al.\cite{Goeckner2018} define a \emph{window} of upcycle $u$ for $\A^n$ as a sequence of $n$ consecutive symbols of $u$. We similarly refer to the $w \times \ell$ subarrays of an upmatrix or uptorus as windows. 
    The \emph{diamondicity} of an upcycle or upword $u$ is the number of $\diam$ symbols that appear in each window of $u$, which is well-defined for all upcycles \cite[Lemma 14]{chen2017universal} and all upwords over non-binary alphabets \cite[Theorem 4.1]{Goeckner2018}. The upcycle $001\diam110\diam$ has diamondicity 1, for example. If each window of an upmatrix or uptorus $m$ has the same number of diamonds, then that number is the \emph{diamondicity} of $m$.

    We will exclude from our consideration upmatrices and uptori that we call \emph{trivial}. There are several categories of trivial upmatrices and uptori.

    First, we say that upmatrices and uptori with no $\diam$'s are trivial because they are De Bruijn tori. Next, an upmatrix or uptorus is trivial if all of its characters are $\diam$'s. For a given subarray size $w \times \ell$ there is exactly one all-$\diam$'s upmatrix (a $w \times \ell$ matrix of $\diam$'s) and exactly one all-$\diam$'s uptorus (a single $\diam$).

    Finally, an upmatrix or uptorus for $\A^{w\times \ell}$ is trivial if either $w=1$ or $\ell=1$. If $w=1$ and the height of the upmatrix or uptorus is also 1 (or $\ell$ and the width respectively), then the resulting object is either an upword or an upcycle, which are treated more thoroughly elsewhere. For example, $(001\diam110\diam)$ would be a trivial uptorus for $\set{0,1}^{1\times4}$. It is possible for an uptorus for $\A^{1\times \ell}$ to have a height greater than one, as described for De Bruijn tori in \cite{RS62}. For example, the torus
            \[
                m = \left[\begin{matrix}
                    0&0&1&\diam&1&1&0&\diam \\
                    0&0&3&\diam&1&1&2&\diam \\
                    0&2&1&\diam&1&3&0&\diam \\
                    0&2&3&\diam&1&3&2&\diam \\
                    2&0&1&\diam&3&1&0&\diam \\
                    2&0&3&\diam&3&1&2&\diam \\
                    2&2&1&\diam&3&3&0&\diam \\
                    2&2&3&\diam&3&3&2&\diam \\
                \end{matrix}\right]
            \]
    is a trivial uptorus for $\set{0,1,2,3}^{1\times4}$. The rows (considered cyclically) of this uptorus are the elements of a universal partial family; we discuss universal partial families in \cref{sec:upfamilies}, and \cref{ex:upfamily} corresponds to the rows of the uptorus $m$ above. 
    
    The dimensions of uptori are restricted when excluding trivial uptori.

    \begin{lem}\label{min-dimension}
        Every nontrivial uptorus for $\A^{w \times \ell}$ has at least $w+1$ rows and at least $\ell+1$ columns.
    \end{lem}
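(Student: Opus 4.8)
The plan is to prove the row bound, $H \ge w+1$, where $H$ is the number of rows; the column bound then follows by transposing, since transposing the torus turns a nontrivial uptorus for $\A^{w\times\ell}$ into a nontrivial uptorus for $\A^{\ell\times w}$ (a $\diam$, a letter, and $w,\ell\ge 2$ are all symmetric conditions). So suppose toward a contradiction that the uptorus $m$ has $H \le w$ rows, and write $C_c$ for the band of columns $\set{c,\dots,c+\ell-1}$ read cyclically. The observation driving everything is that when $H\le w$, every height-$w$ window wraps vertically through all $H$ rows of $m$: the window at $(r,c)$ reads rows $(r+y)\bmod H$ for $y=0,\dots,w-1$, and a run of $w\ge H$ consecutive residues covers all of $\mathbb{Z}/H$. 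Hence the set of symbols occurring in the window at $(r,c)$ equals the set of symbols of $m$ in the band $C_c$, and this set does not depend on $r$.

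For the case $H\ge 2$, fix a letter $a\in\A$ and let $M_a$ be the constant matrix with every entry equal to $a$. It is covered, say by the window at $(r_0,c_0)$, and covering $M_a$ forces every non-$\diam$ symbol of that window to be $a$; by the observation, all letters of $m$ in $C_{c_0}$ are then $a$. The window at $(r_0+1,c_0)$ has the same symbol set, so all of its letters are $a$ as well, and it therefore also covers $M_a$. Since $H\ge 2$ the positions $(r_0,c_0)$ and $(r_0+1,c_0)$ are distinct, so $M_a$ is covered at least twice, contradicting \cref{def:uptorus}. (Notably this uses only $2\le H\le w$ and no $\diam$'s, so it simultaneously reproves that De Bruijn tori need more than $w$ rows.)

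The remaining case $H=1$ is the step I expect to be the main obstacle, because the shift argument collapses when there is only one vertical position. Here I would invoke nontriviality, which supplies $w,\ell\ge 2$, $\abs{\A}\ge 2$, and at least one letter in $m$. Now $m$ is a single cyclic row, and each window consists of some length-$\ell$ block $\rho_c$ of that row repeated across all $w$ rows, so it covers a matrix $M$ exactly when every column of $M$ sitting under a letter of $\rho_c$ is constant (equal to that letter). If some $\rho_c$ were all $\diam$'s it would already cover every matrix in $\A^{w\times\ell}$, and then any second window position would produce a double cover; this forces the width to be $1$ and $m$ to be the single symbol $\diam$, the excluded all-$\diam$ uptorus. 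Hence every $\rho_c$ contains a letter, but then a matrix all of whose columns are non-constant (which exists since $w,\abs{\A}\ge 2$) is covered by no window, contradicting that $m$ covers everything. This settles $H=1$ and completes the argument.
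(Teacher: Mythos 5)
Your proof is correct, and its core is the same double-covering idea as the paper's: a window of height $w$ in a torus with $H\le w$ rows wraps through every row, so once the all-constant matrix is covered, shifting the window down one row covers it again. The paper's proof runs exactly this argument, but only for a torus with \emph{exactly} $w$ rows (concluding the constant matrix is covered ``$w>1$ times, once starting at each row''), and it dismisses the column bound as ``similar.'' Your version is strictly more complete in two respects. First, you treat all $H\le w$, and you correctly isolate $H=1$ as the case where the shift argument genuinely breaks down: with a single row the constant matrix is covered only once, so no contradiction arises from it, and one must instead argue (as you do) that every window is a vertically repeated block, so either some block is all $\diam$'s --- forcing the excluded $1\times 1$ all-$\diam$ torus --- or some matrix with no constant column is never covered. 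If one reads the paper's ``exactly $w$ rows'' as standing in for ``at most $w$ rows,'' its counting gives only ``$H$ times,'' which is vacuous at $H=1$, so your extra case is not pedantry but a real repair. Second, your reduction of the column bound to the row bound by transposition (checking that nontriviality is symmetric) is cleaner than leaving it as ``similar.'' The only point worth making explicit is that the two covering positions $(r_0,c_0)$ and $(r_0+1,c_0)$ are distinct \emph{positions} even if they happen to read off identical subarrays; this is the same multiplicity convention the paper itself uses, so it is consistent with \cref{def:uptorus}.
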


    \begin{proof}
        Suppose not, and let $t$ be a nontrivial uptorus for $\A^{w\times \ell}$ having exactly $w$ rows (the proof for an uptorus having exactly $\ell$ columns is similar). Assume without loss of generality that $0 \in \A$, and let $m$ be the $w \times \ell$ matrix of all $0$'s. By the definition of an uptorus, $t$ covers $m$. Because $t$ has exactly $w$ rows, there are $\ell$ consecutive columns of $t$ that are all $0$'s and $\diam$'s. But then $t$ covers $m$ more than once, in particular $w > 1$ times, once starting at each row of the $\ell$ columns of all $0$'s and $\diam$'s, contradicting the assumption that $t$ is an uptorus.
    \end{proof}

    The constructions we develop for uptori and upmatrices rely on notation to describe two-dimensional cyclic objects and their elements. If $c$ is a sequence of characters in $\A$ and $0 \leq n < \abs{c}$, then $c_n$ is the $n^\text{th}$ character of $c$. If $c$ is a word over $\A$ and $n$ is some integer, by $c^n$ we mean the concatenation of $n$ repeated copies of $c$. 

    Talking about ``rotations'' of cyclic objects requires some care. Consider the De Bruijn cycle for $\set{0,1}^2$. It can be represented in several equally valid ways: $0011$, $1001$, $0011$, and $0110$. Throughout the paper, when we refer to indices of a cyclic (partial) word, the proofs do not depend on any specific choice of starting character; the zero index can be selected arbitrarily or determined by an ordering such as using Lyndon words.

    For our construction, however, we need to describe relative rotations of representations of a given cyclic partial word. Consider some representation of a cyclic partial word $c = c_0 \cdots c_{\abs{c}-1}$. We define a forward rotation function $\sigma(c) = c_1 \cdots c_{\abs{c}-1}c_0$. Repeated applications of $\sigma$ are denoted $\sigma^i(c) = c_i \cdots c_{\abs{c}-1} c_0 \cdots c_{i-1}$.
    
    We define alternating words and alternating De Bruijn cycles as in \cite{Kreitzer2024}.

    \begin{defn}[Alternating Word]\label{def:aword}
        An \emph{alternating word} on an alphabet pair $(\mathcal{A}, \mathcal{B})$ is a word in $\mathcal{A} \times \mathcal{B} \times \mathcal{A} \times \cdots \times \mathcal{B} \times \mathcal{A}$. This word has length $2n + 1$, where $n$ is a non-negative integer.
    \end{defn}

    \begin{defn}[Alternating De Bruijn Cycle]\label{def:adbs}
        An \emph{alternating De Bruijn cycle} of order $2n + 1$ on an alphabet pair $(\mathcal{A}, \mathcal{B})$ is a cyclic word that contains every alternating word of length $2n + 1$ on the alphabet pair ($\mathcal{A},\mathcal{B})$ exactly once. It has a length of $2\abs{\mathcal{A}}^{n+1}\abs{\mathcal{B}}^{n}$.
    \end{defn}
    
    \begin{defn}[Unrolled Alternating De Bruijn Cycle]\label{def:audbc}
        If $A_0B_0A_1B_1 \cdots B_{v-1}A_vB_v$ is an alternating De Bruijn cycle, then $A_0B_0A_1B_1 \ldots A_{v-1}B_{v-1}A_vB_vA_0$ is an \emph{unrolled alternating De Bruijn cycle} of the same order for alphabet pair $(\mathcal{A}, \mathcal{B})$.
    \end{defn}

    \begin{rem}
        If the order of an alternating De Bruijn cycle is greater than $3$, then the corresponding unrolled alternating De Bruijn cycle does not cover the same set of words as the alternating De Bruijn cycle from which it was constructed.
    \end{rem}

    We use the following facts about the existence of De Bruijn cycles, universal words, upwords, and upcycles. Upwords and upcycles are considered nontrivial when they contain at least one $\diam$ and at least one letter of $\A$, and universal words and De Bruijn cycles are trivial upwords and upcycles, respectively. 
    \begin{thm}[Van Aardenne-Ehrenfest and De Bruijn \cite{AB51}, Tutte and Smith \cite{TS41}]\label{thm:dbcount}
        For all integers $a\ge 2$ and $n \ge 1$, there are $(a!)^{a^{n-1}}/a^n$ De Bruijn cycles for $\set{0,1,\ldots,a-1}^n$ and therefore $(a!)^{a^{n-1}}$ universal words for $\set{0,1,\ldots,a-1}^n$.
    \end{thm}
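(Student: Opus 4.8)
The plan is to realize De Bruijn cycles as Eulerian circuits of the de Bruijn graph and then apply the BEST theorem, which is named precisely for the authors cited in this statement. First I would set up the digraph $G$ whose vertices are the $a^{n-1}$ words in $\set{0,\ldots,a-1}^{n-1}$ and whose $a^n$ arcs are the words in $\set{0,\ldots,a-1}^n$, where the arc $w_0\cdots w_{n-1}$ is directed from $w_0\cdots w_{n-2}$ to $w_1\cdots w_{n-1}$. Every vertex then has in-degree and out-degree exactly $a$, and $G$ is strongly connected, hence Eulerian. Recording the final symbol of each arc along an Eulerian circuit produces a cyclic word of length $a^n$ containing every length-$n$ word exactly once, and conversely the overlapping length-$n$ windows of a De Bruijn cycle list the arcs of an Eulerian circuit; these maps are mutually inverse, so De Bruijn cycles for $\set{0,\ldots,a-1}^n$ correspond bijectively to Eulerian circuits of $G$, counted (as cyclic arc-sequences) in the same way BEST counts them.

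Next I would invoke the BEST theorem: for a connected Eulerian digraph the number of Eulerian circuits equals $\tau(G)\prod_{v}(\deg^+(v)-1)!$, where $\tau(G)$ is the number of spanning arborescences rooted at a fixed vertex, a quantity independent of the root because $G$ is balanced. Here every out-degree equals $a$, so the product is $((a-1)!)^{a^{n-1}}$, and it remains only to show $\tau(G)=a^{a^{n-1}-n}$. Once this is in hand, using $a!=a\,(a-1)!$ gives $\tau(G)\,((a-1)!)^{a^{n-1}}=a^{a^{n-1}-n}((a-1)!)^{a^{n-1}}=(a!)^{a^{n-1}}/a^n$, the claimed count of cycles.

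The arborescence count is the heart of the argument and the step I expect to be hardest. I would obtain it from the directed Matrix--Tree theorem, which writes $\tau(G)$ as any cofactor of the Laplacian $L=aI-A$, with $A$ the adjacency matrix of $G$; since $G$ is balanced all such cofactors coincide and equal $\tfrac{1}{a^{n-1}}$ times the product of the nonzero eigenvalues of $L$. The key spectral fact is that $A$ has eigenvalue $a$ with multiplicity one and eigenvalue $0$ with multiplicity $a^{n-1}-1$, which I would establish via power sums: a closed walk of length $k$ in $G$ with a marked starting arc is encoded by a free choice of $k$ symbols read cyclically, so $\operatorname{tr}(A^k)=a^k$ for every $k\ge 1$, and the only eigenvalue multiset with these power sums is $a$ once and $0$ otherwise. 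Consequently $L$ has a simple eigenvalue $0$ together with the eigenvalue $a$ of multiplicity $a^{n-1}-1$, whence $\tau(G)=a^{a^{n-1}-1}/a^{n-1}=a^{a^{n-1}-n}$.

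Finally, to count universal words I would note that every universal word for $\set{0,\ldots,a-1}^n$ closes up into a De Bruijn cycle: its windows trace an Eulerian trail in $G$, which must be closed because $G$ is balanced, so its first and last $n-1$ symbols agree. Thus universal words correspond bijectively to pairs consisting of a De Bruijn cycle and one of its $a^n$ starting positions, giving $a^n\cdot(a!)^{a^{n-1}}/a^n=(a!)^{a^{n-1}}$ universal words.
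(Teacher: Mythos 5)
The paper does not prove this statement: it is quoted as a known classical result, with the proof deferred to the cited references. Your argument is the standard one (de Bruijn graph, BEST theorem, directed matrix--tree theorem with the spectrum of the de Bruijn graph computed from power sums), and it is correct; this is essentially the proof in the cited sources. The only step you leave implicit is in the final bijection for universal words: you need the $a^n$ rotations of a De Bruijn cycle to be pairwise distinct, which holds because a cyclic word of length $a^n$ with a nontrivial period $p<a^n$ would cover each of its windows at positions $i$ and $i+p$, contradicting exactly-once coverage.
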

    \begin{thm}[Theorems 9, 10, 13, and 17 in Chen, Kitaev,  M\"{u}tze, and Sun \cite{chen2017universal}]\label{thm:upwordexist}
        For all integers $n \ge 2$, there is at least one nontrivial upword for $\set{0,1}^n$.
    \end{thm}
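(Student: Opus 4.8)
The plan is to produce an \emph{explicit} family of upwords, one for each $n$, rather than argue abstractly, since over the binary alphabet the diamondicity need not be well-defined (contrast \cite[Theorem 4.1]{Goeckner2018}): I cannot assume every window carries the same number of $\diam$'s, so I must track coverage window-by-window. I would first dispose of the smallest cases by hand. For $n=2$ the partial word $\diam 011$ is already a nontrivial upword, since its windows $\diam 0,\ 01,\ 11$ cover $00,10,01,11$ once each, and a similar short example settles $n=3$. These also serve as sanity checks for the general construction.

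For general $n$ I would reduce to an Eulerian-trail problem in the De Bruijn graph $G_{n-1}$ whose vertices are the words of $\set{0,1}^{n-1}$ and whose edges are the words of $\set{0,1}^n$, each word $a_1\cdots a_n$ being the edge $a_1\cdots a_{n-1}\to a_2\cdots a_n$. A \emph{diamond-free} upword is exactly an Eulerian trail of $G_{n-1}$, which exists by \cref{thm:dbcount}; the idea is to insert a single interior $\diam$ and read off which words this forces us to drop from an ordinary trail. A $\diam$ placed with $n-1$ letters on each side lies in exactly $n$ windows, occupying each relative position $0,1,\ldots,n-1$ once, so it covers the $2n$ words $a_r\cdots a_1\, c\, b_1\cdots b_{n-1-r}$ for $r=0,\ldots,n-1$ and $c\in\set{0,1}$, where $a_1\cdots a_{n-1}$ and $b_1\cdots b_{n-1}$ are the letters flanking the $\diam$. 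The construction would choose these flanking letters, and the global trail around them, so that these $2n$ words are pairwise distinct and the remaining $2^n-2n$ words are covered exactly once by the diamond-free windows; equivalently, I must exhibit an Eulerian trail in $G_{n-1}$ with the $2n$ edges corresponding to the $\diam$-covered words deleted, compatible with passing through the $\diam$'s position.

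The hard part is the degree and connectivity bookkeeping: deleting the $2n$ edges must leave a graph that is still connected and has at most two vertices with unequal in- and out-degree (with the correct $\pm 1$ imbalance at the endpoints), since otherwise no Eulerian trail, and hence no upword, exists. Because the deleted edges come in complementary pairs differing in a single letter, their effect on the degree sequence depends delicately on the parities of the flanking letters and on $n$ itself; I expect the clean choice of flanking pattern, and possibly the exact number of windows in which the $\diam$ participates, to depend on $n \bmod 2$, which is presumably why the cited result is assembled from several separate theorems. Verifying exact-once coverage, namely that no target word is hit by both a $\diam$-window and an ordinary window, is the real crux, and it is there that choosing the surrounding De Bruijn trail carefully, rather than taking an arbitrary one, becomes essential.
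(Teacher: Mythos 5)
This statement is quoted in the paper without proof; it is imported verbatim from Chen, Kitaev, M\"utze, and Sun \cite{chen2017universal}, so there is no in-paper argument to compare against. Judged on its own terms, your proposal correctly identifies the framework that the cited work actually uses --- single-$\diam$ binary upwords analyzed via Eulerian trails in the De Bruijn graph on $\set{0,1}^{n-1}$ with the $\diam$-covered edges removed --- and your $n=2$ example $\diam 011$ is a valid nontrivial upword. But what you have written is a plan, not a proof. The entire content of the existence theorem is precisely the part you defer: you never specify the flanking letters $a_1\cdots a_{n-1}$ and $b_1\cdots b_{n-1}$, never verify that the $2n$ deleted edges leave a connected graph with the correct in/out-degree imbalances, and never verify that the $2n$ words covered through the $\diam$ are pairwise distinct and disjoint from the words covered by the ordinary windows. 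Phrases such as ``I expect the clean choice \ldots to depend on $n \bmod 2$'' and ``presumably why the cited result is assembled from several separate theorems'' signal that the existence claim is being hoped for rather than established; an adversarial reader has no reason to believe a valid choice exists for every $n$.

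There is also a structural error in the reduction itself. If the single $\diam$ sits in the interior with at least $n-1$ letters on each side, then the $\diam$-free windows do \emph{not} form one Eulerian trail of the reduced graph: they split into two edge-disjoint trails, one traced by the prefix before the $\diam$ and one by the suffix after it, whose four endpoints are forced by the flanking letters. So the correct target is a decomposition of $G_{n-1}$ minus $2n$ edges into two trails with prescribed endpoints, which is a strictly harder bookkeeping problem than the single-trail condition you state (and is one reason constructions in the literature tend to place the $\diam$ within distance $n-1$ of an end of the word, so that fewer windows contain it and only one trail remains). To turn this into a proof you would need either to fix an explicit family of upwords and verify coverage directly, or to pin down the deleted edge set exactly and carry out the degree, connectivity, and disjointness verifications for all $n$, splitting into the parity cases you anticipate.
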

    \begin{thm}[Theorem 5.2 in Goeckner et al. \cite{Goeckner2018}, Fillmore et al. \cite{Fillmore2023}]\label{thm:upcycleexist}
        For all even integers $a \ge 2$ and $n \in \set{4, 8}$, there is at least one nontrivial upcycle for $\set{0,1,\ldots, a-1}^n$ that has diamondicity $1$ and length $a^{n-1}$. Therefore for all even integers $a \ge 2$ and $n \in \set{4, 8}$, there is at least one nontrivial upword for $\set{0,1,\ldots, a-1}^n$.
    \end{thm}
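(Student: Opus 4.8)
The plan is to treat this as a constructive existence result that, as the attribution indicates, combines the construction of Goeckner et al.\ (for $n=4$) with that of Fillmore et al.\ (for $n=8$). First I would pin down the shape that any diamondicity-$1$ upcycle must have by a counting argument: if $u$ is an upcycle for $\set{0,1,\ldots,a-1}^n$ with exactly one $\diam$ per window, then each of its $\abs{u}$ windows covers exactly $a$ words (the $\diam$ ranging over $\A$), so covering all $a^n$ words exactly once forces $a\abs{u}=a^n$, i.e.\ $\abs{u}=a^{n-1}$. This both matches the claimed length and tells me where to look.

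Next I would fix the positions of the diamonds. Placing a $\diam$ at every position congruent to $n-1$ modulo $n$ makes each length-$n$ window contain exactly one $\diam$, so diamondicity $1$ is automatic; this placement is consistent cyclically precisely when $n \mid a^{n-1}$. Since $a$ is even we have $2^{n-1}\mid a^{n-1}$, and since $n\in\set{4,8}$ is a power of $2$ with $n\le 2^{n-1}$, indeed $n\mid 2^{n-1}\mid a^{n-1}$; this is exactly where the hypotheses on $a$ and $n$ first get used. The word then splits into $a^{n-1}/n$ blocks, each an $(n-1)$-letter string followed by a $\diam$, and it remains to choose the letters so that coverage is exactly once.

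The core step is to realize this remaining choice as an Eulerian-type circuit. Viewing $\A^n$ as the edge set of the De Bruijn graph on vertex set $\A^{n-1}$, a window with its $\diam$ in a fixed position covers a ``bundle'' of $a$ edges obtained by fixing $n-1$ coordinates and letting one vary; there are exactly $a^{n-1}$ such bundles. A diamondicity-$1$ upcycle is then a cyclic, overlapping ordering of all these bundles that uses every edge exactly once, which is precisely an Eulerian circuit in an auxiliary transition multigraph whose edges are the diamond-bundles and whose vertices encode the shared coordinates between consecutive windows. I would define that graph so each admissible block-to-block transition is an edge, verify it is balanced (equal in- and out-degree at every vertex, which should follow from the symmetry of the bundle construction), and then prove it is connected. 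Connectivity is the main obstacle, and it is here that the evenness of $a$ and the specific values $n\in\set{4,8}$ are needed to rule out the graph splitting into parallel components; Euler's theorem then yields the circuit, hence the upcycle. The $a=2$, $n=4$ upcycle $001\diam110\diam$ is the smallest instance and serves as a check of the bundle bookkeeping.

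Finally, the ``therefore'' is immediate. Given an upcycle $u$ of length $a^{n-1}$, the partial word $u_0u_1\cdots u_{a^{n-1}-1}u_0u_1\cdots u_{n-2}$ of length $a^{n-1}+n-1$ has exactly $a^{n-1}$ length-$n$ windows, in bijection with the cyclic windows of $u$, so it covers every word of $\set{0,1,\ldots,a-1}^n$ exactly once and is a nontrivial upword.
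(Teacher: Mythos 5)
This statement is quoted from the literature: the paper offers no proof of it, only the citations to Goeckner et al.\ and Fillmore et al., so the relevant comparison is with the arguments in those papers. Your counting step ($\abs{u}=a^{n-1}$ for diamondicity $1$), your divisibility observation ($n \mid a^{n-1}$ when $a$ is even and $n\in\set{4,8}$, which is needed for the diamonds to be equally spaced), and your final ``unrolling'' step from upcycle to upword are all correct. But the core existence claim is not established, for two reasons. First, the Eulerian reduction is set up incorrectly. A window with its $\diam$ in position $j$ covers a bundle of $a$ words, and there are $a^{n-1}$ bundles \emph{for each} of the $n$ diamond positions, i.e.\ $n\cdot a^{n-1}$ bundles in all; a single word of $\A^n$ lies in $n$ different bundles. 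An upcycle uses only $a^{n-1}$ of these bundles (in fact $a^{n-1}/n$ for each diamond position, since the diamond position cycles as the window slides), and ``covers every word exactly once'' is an exact-cover condition — the chosen bundles must partition $\A^n$ — not an ``every edge exactly once'' condition in a graph whose edge set is all bundles. So even granting a balanced, connected auxiliary graph, an Eulerian circuit in it would not yield an upcycle. Second, the balance and connectivity claims, which you acknowledge are ``the main obstacle,'' are asserted rather than proved, and that is where all of the content lives. A telling sanity check: nothing in your framework visibly breaks for $n=16$ or for odd $n$, yet nontrivial upcycles are known only for $n\in\set{4,8}$ (and the paper explicitly notes that existence for odd alphabet sizes is open), so whatever makes these cases work must be more specific than what you have written.

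For the record, the actual route in the cited papers is different in kind: Goeckner et al.\ (Theorem 5.2) give an explicit construction of a diamondicity-$1$ upcycle for $\set{0,\ldots,a-1}^4$ with $a$ even, built around the seed $001\diam110\diam$, and Fillmore et al.\ obtain the $n=8$ cases from computationally discovered binary upcycles of length $2^7$ together with an alphabet multiplier theorem that inflates a diamondicity-$1$ upcycle over $\A$ to one over an alphabet of size $k\abs{\A}$. If you want a self-contained proof, the realistic path is to exhibit (or cite) the explicit binary upcycles for $n=4$ and $n=8$ and then prove the alphabet multiplier step; your last paragraph, which converts any such upcycle into an upword of length $a^{n-1}+n-1$, can then stand as written.
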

\section{Computationally Identified Upmatrices}\label{sec:results-comp-upmatrices}

    A computational search identified many upmatrices for $\set{0,1}^{2\times 2}$. There are three possible shapes of upmatrices, and the computational search found all three. Upmatrices can be considered a strict superset of uptori, their cyclic counterparts. Any uptorus can be converted to an upmatrix by adding a copy of the leftmost $\ell-1$ columns to the right, and of the topmost $w-1$ rows to the bottom. The resulting upmatrix then shows this repetition (and can be glued together into the uptorus). Some upmatrices, however, have non-identical leftmost and rightmost $\ell-1$ columns, non-identical topmost and bottommost $w-1$ rows, or both; they have no direct conversion to an uptorus. Upmatrices for $\A^{w\times \ell}$ that can be treated cyclically in one direction can be glued together into a cylinder that covers each matrix in $\A^{w\times \ell}$ exactly once.
       
    \begin{example}\label{ex:upmatrices}
    The following three upmatrices for $\set{0,1}^{2\times 2}$ were identified computationally and demonstrate these three shapes: the first upmatrix can be treated cyclically in one direction, the second can be treated cyclically in neither direction, and the third can be treated cyclically in both directions. The vertical and horizontal lines within the upmatrices indicate possible gluings. 
         \begin{center}
        \begin{multicols}{3}
            $\left[\begin{array}{llll|l}
                0 & 0 & 1 & 1& 0\\
                0 & \diam & 1 & \diam & 0 \\
                1 & 0 & 0 & 1 & 1
            \end{array}\right]$
            
            $\begin{bmatrix}
                0 & 0 & 0 & 1 & 1 & 1\\
                0 & 1 & 1 & 0 & 0 & 1\\
                \diam &1&\diam &\diam &0&0\\
            \end{bmatrix}$

            $\left[\begin{array}{llll|l}
                \diam & 0 & 0 & 1 & \diam\\
                1 & 1 & 0 & 0 & 1\\
                1 & 1 & 0 & 0 & 1\\\hline
                \diam & 0 & 0 & 1 & \diam\\
            \end{array}\right]$
        \end{multicols}
    \end{center}

    \end{example}

    As suggested by the above examples, many dimensions are possible for upmatrices for $\set{0,1}^{2\times 2}$. The size (total number of entries) must be between $4$ and $34$. Two sorts of trivial upmatrices for $\set{0,1}^{2\times 2}$ are a $2 \times 2$ matrix containing all $\diam$'s, and matrices containing no $\diam$'s, for example:
    \[
        \begin{bmatrix}
            0 & 0 & 0 & 0 & 0 & 1 & 0 & 1 & 0 & 1 & 0 & 1 & 1 & 1 & 1 & 1 & 0 \\
            0 & 0 & 1 & 1 & 0 & 0 & 0 & 1 & 1 & 0 & 1 & 1 & 0 & 0 & 1 & 1 & 0
        \end{bmatrix}.
    \]

    \begin{example}
        The matrix 
        \[
        \begin{bmatrix}
            0 & 0 & 0 & 0 & 0 & 1 & 0 & 1 & \diam & 1 & \diam \\
            0 & 0 & 1 & 1 & 0 & 0 & 1 & \diam & 0 & 1 & 1
        \end{bmatrix}
        \] is a $2 \times 11$ upmatrix for $\set{0,1}^{2\times 2}$. As the number of rows is the same in the upmatrix and in the matrices it covers, this example shows that \cref{min-dimension} cannot be extended to upmatrices.
    \end{example}

    The full lists of upmatrices for $\set{0,1}^{2\times 2}$ of sizes $3 \times 5$, $3 \times 6$, and $4 \times 4$ are available in \cite{D_Carey_Universal_Partial_Tori}.
    
\section{Upmatrices Constructed from Upwords}\label{sec:results-constructed-upmatrices}

    Given a universal partial word $w$ for $\A^n$ and an integer $p \ge 2$, we construct a $p \times |w|$ matrix $\mu$ whose first row is $w$ and whose subsequent $p-1$ rows contain only $\diam$'s:
        \[
            \mu(w,p) = \begin{bmatrix}
                w\\
                \diam^{|w|}\\
                \vdots\\
                \diam^{|w|}
            \end{bmatrix}.
        \]
    For example, choosing the universal word $0120221100$ for $\set{0,1,2}^2$ and $p=2$ produces the matrix
        \[
            \mu(0120221100,2) = \begin{bmatrix}
                0 & 1 & 2 & 0 & 2 & 2 & 1 & 1 & 0 & 0\\
                \diam & \diam & \diam & \diam & \diam & \diam & \diam & \diam & \diam & \diam\\
            \end{bmatrix},
        \]
    which is a nontrivial upmatrix for $\set{0,1,2}^{2\times 2}$.
    
    \begin{thm}\label{thm:upmatrix-construction}
        If $w$ is an upword for $\A^n$ and $p \ge 2$ is an integer, then $\mu(w,p)$ is a universal partial matrix for $\A^{p \times n}$.
    \end{thm}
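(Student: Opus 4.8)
The plan is to reduce the two-dimensional covering problem directly to the defining property of the upword $w$, via a careful analysis of which $p\times n$ windows $\mu(w,p)$ has and what each one covers. Since $\mu(w,p)$ has exactly $p$ rows and is non-cyclic, any $p \times n$ subarray must occupy all $p$ rows (so $r=0$ in \cref{def:subarray}) and $n$ consecutive columns starting at some column $c$ with $0 \le c \le |w|-n$. Thus the windows of $\mu(w,p)$ are indexed by exactly the same set of starting columns as the windows of $w$: the window $W_c$ of $\mu(w,p)$ at column $c$ has $w_c\cdots w_{c+n-1}$ as its top row and $\diam$'s in all of its remaining $p-1$ rows.

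Next I would characterize what $W_c$ covers. To obtain a full matrix $M \in \A^{p\times n}$ from $W_c$, every $\diam$ of $W_c$ must be replaced by a letter of $\A$ and every non-$\diam$ entry of $W_c$ must already agree with $M$. The only non-$\diam$ entries of $W_c$ lie in its top row, at the positions where $w_{c+j} \in \A$; the entire lower block is $\diam$'s and so imposes no constraint. Hence $W_c$ covers $M$ precisely when the top row of $M$ is one of the words of $\A^n$ covered by the window $w_c\cdots w_{c+n-1}$ of $w$, with rows $1,\dots,p-1$ of $M$ completely free.

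The heart of the argument is then to invoke the upword hypothesis. Given an arbitrary target $M \in \A^{p\times n}$, write $u \in \A^n$ for its top row. Because $w$ is an upword for $\A^n$, the word $u$ is covered by exactly one window of $w$, say the one starting at column $c$; this pins down a unique candidate window $W_c$ of $\mu(w,p)$, since $W_{c'}$ can cover $M$ only if its top window covers $u$. It remains to check that $W_c$ covers $M$ exactly once rather than merely at least once: the $\diam$'s of $W_c$ occur in the top row (at the diamond positions of that window of $w$) and throughout the lower block, and matching $M$ forces each of these replacements uniquely --- the top-row diamonds to the corresponding entries of $u$ and the lower-block diamonds to the corresponding entries of $M$. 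Combining existence of the window, uniqueness of the window, and uniqueness of the diamond assignment within it yields that each $M$ is covered exactly once.

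I do not expect a genuine obstacle here; the result is essentially a bijection argument, and the main points requiring care are bookkeeping rather than depth. Specifically, I must use that $\mu(w,p)$ is non-cyclic so that its windows correspond one-to-one with the (non-wrapping) windows of $w$, and I must separate the two distinct senses of \emph{exactly once} --- that only one window can be responsible for a given $M$, and that within that window the assignment of letters to $\diam$'s is forced. Notably, the argument uses only the defining covering property of upwords and never appeals to diamondicity, so it applies uniformly even over binary alphabets, where the diamondicity of an upword need not be well-defined.
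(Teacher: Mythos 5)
Your proposal is correct and follows essentially the same route as the paper's proof: identify the $p\times n$ windows of $\mu(w,p)$ with the length-$n$ windows of $w$, note that the all-$\diam$ lower block imposes no constraint, and invoke the upword property of $w$ for the top row. Your write-up simply spells out the bookkeeping (uniqueness of the responsible window and of the diamond assignment) that the paper's shorter proof leaves implicit.
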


    \begin{proof}
        As $\mu(w,p)$ has $p$ rows, its $p \times n$ subarrays are in one-to-one correspondence with the length-$n$ subwords of the first row of $\mu(w,p)$. Any matrix $t \in \A^{p \times n}$ is covered exactly once in $\mu(w,p)$ because its first row is covered exactly once in the first row of $\mu(w,p)$, by the definition of an upword, and the remaining $p-1$ rows of $t$ are covered by $\diam$'s in $\mu(w,p)$.
    \end{proof}

    \begin{cor}
        \cref{thm:upmatrix-construction} constructs infinitely many nontrivial upmatrices. In particular:
        \begin{enumerate}
            \item For all integers $a \ge 2$, $n \ge 1$, and $p\ge 2$, \cref{thm:upmatrix-construction} constructs $(a!)^{a^{n-1}}$ distinct upmatrices for $\set{0,\ldots,a-1}^{p\times n}$ having no diamonds in the top row. 
            \item For all integers $n \ge 2$ and $p \ge 2$, \cref{thm:upmatrix-construction} constructs at least one upmatrix for $\set{0,1}^{p \times n}$ having at least one diamond and at least one letter of the alphabet $\set{0,1}$ in the top row.
            \item For all even integers $a \ge 2$, $n \in \set{4,8}$, and all integers $p \ge 2$, \cref{thm:upmatrix-construction} constructs at least one upmatrix for $\set{0,\ldots,a-1}^{p \times n}$ having at least one diamond and at least one letter of the alphabet $\set{0,\ldots,a-1}$ in the top row.
        \end{enumerate}
    \end{cor}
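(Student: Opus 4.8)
The plan is to derive each clause as an immediate application of \cref{thm:upmatrix-construction} to the one-dimensional existence and enumeration results quoted in \cref{sec:prelim}; the real work is bookkeeping, namely matching each part to the correct source theorem and verifying the stated condition on the top row. First I would dispatch the overarching claim of infinitely many nontrivial upmatrices. Fix any admissible $a$ and any $n \ge 2$; for each integer $p \ge 2$, \cref{thm:upmatrix-construction} produces an upmatrix $\mu(w,p)$ of dimensions $p \times |w|$. Distinct values of $p$ give matrices with different numbers of rows, so these are pairwise distinct, yielding infinitely many. Nontriviality follows by ruling out each of the three trivial categories: the subarray size is $p \times n$ with $p,n \ge 2$, so neither dimension equals $1$; the top row contains a letter of $\A$, so $\mu(w,p)$ is not all $\diam$'s; and since $p \ge 2$, the bottom $p-1$ rows are entirely $\diam$'s, so $\mu(w,p)$ is not diamond-free and hence not a De Bruijn torus.

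For part (1) I would invoke \cref{thm:dbcount}, which supplies $(a!)^{a^{n-1}}$ universal words for $\set{0,\ldots,a-1}^n$. Each such word is a diamond-free upword, so \cref{thm:upmatrix-construction} converts it into an upmatrix whose top row contains no $\diam$'s. The count transfers because the assignment $w \mapsto \mu(w,p)$ is injective: the top row of $\mu(w,p)$ recovers $w$ exactly, so distinct universal words yield distinct upmatrices. This is the step demanding the most care, since part (1) permits $n = 1$, in which case the resulting objects have subarray width $\ell = 1$ and are therefore trivial; the clause asserts only distinctness and the top-row property, not nontriviality, so this is consistent, but it means the \emph{infinitely many nontrivial} conclusion must be read off from the $n \ge 2$ instances only.

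For parts (2) and (3) the argument is identical except for the source of the input word and the top-row condition. Part (2) uses \cref{thm:upwordexist} to obtain, for each $n \ge 2$, a nontrivial upword $w$ for $\set{0,1}^n$, meaning $w$ contains at least one $\diam$ and at least one letter; the top row of $\mu(w,p)$ then inherits both. Part (3) is the same with \cref{thm:upcycleexist} supplying a nontrivial upcycle, hence a nontrivial upword, for $\set{0,\ldots,a-1}^n$ whenever $a$ is even and $n \in \set{4,8}$. In each case \cref{thm:upmatrix-construction} certifies that $\mu(w,p)$ is a genuine upmatrix for the stated subarray size.

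I expect no deep obstacle here, as the statement is a packaging of the construction with three off-the-shelf inputs. The only points that repay attention are the consistent tracking of the three trivial categories, the observation that nontriviality of the input upword propagates to a diamond-and-letter top row in parts (2) and (3), and the injectivity of $w \mapsto \mu(w,p)$ that underlies the enumeration in part (1).
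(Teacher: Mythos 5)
Your proposal is correct and follows the same route as the paper, whose entire proof is the one-line citation ``By Theorems \ref{thm:dbcount}, \ref{thm:upwordexist}, and \ref{thm:upcycleexist}, respectively''; you simply spell out the bookkeeping (injectivity of $w \mapsto \mu(w,p)$, the three trivial categories, and the $n=1$ caveat in part (1)) that the paper leaves implicit.
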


    \begin{proof}
        By Theorems \ref{thm:dbcount}, \ref{thm:upwordexist}, and \ref{thm:upcycleexist}, respectively. 
    \end{proof}

\section{Computationally Identified Uptori}\label{sec:results-comp-uptori}
    A computational search identified 
    \[
        \begin{bmatrix}
            \diam & 0 & 0 & 1\\
            1 & 1 & 0 & 0\\
            1 & 1 & 0 & 0\\
        \end{bmatrix}
    \]
    as a minimal example of a nontrivial uptorus for $\set{0,1}^{2\times 2}$ in its dimensions and number of diamonds.

    First, all nontrivial uptori (as defined in \cref{sec:prelim}) have at least one $\diam$ and $w, \ell \ge 2$. The $3 \times 4$ dimensions of the uptorus are also the minimum. This can be seen as, by Lemma \ref{min-dimension}, the minimum dimensions of a candidate torus are $3\times 3$, so we need only consider that case here. There are sixteen matrices in $\set{0,1}^{2 \times 2}$. An uptorus must cover each exactly once. With no $\diam$'s, such a torus would cover 9 matrices in $\set{0,1}^{2\times 2}$. With one $\diam$ it would cover 13 matrices in $\set{0,1}^{2\times 2}$, too few. With two $\diam$'s it would cover 18 (or 19, if the two $\diam$'s are adjacent), too many.

    \begin{rem}
        The minimal example we describe here represents many equivalent uptori. We consider two uptori to be part of the same equivalence class if they are related by horizontal reflection, vertical reflection, the permutation of the alphabet (excluding $\diam$'s), column-wise rotation, row-wise rotation, transposition, or compositions of those operations. For example, the following uptori are equivalent to each other and to the above minimal example:
        \begin{multicols}{4}
            $\begin{bmatrix}
                \diam & 1 & 1 & 0\\
                0 & 0 & 1 & 1\\
                0 & 0 & 1 & 1\\
            \end{bmatrix}$
        
            $\begin{bmatrix}
                1 & 0 & 0 & \diam\\
                0 & 0 & 1 & 1\\
                0 & 0 & 1 & 1\\
            \end{bmatrix}$

            $\begin{bmatrix}
                1 & 1 & 0 & 0\\
                1 & 1 & 0 & 0\\
                \diam & 0 & 0 & 1\\
            \end{bmatrix}$

            $\begin{bmatrix}
                \diam & 1 & 1 \\
                0 & 1 & 1 \\
                0 & 0 & 0 \\
                1 & 0 & 0 \\
            \end{bmatrix}$
    \end{multicols}
    \end{rem}

    A computational search identified this uptorus for the $2\times2$ subarray size. We have not ruled out the existence of uptori for the $2\times3$ and $3\times3$ window sizes, but have also not identified them computationally. In the next section we show how to construct uptori for larger matrices, which would be infeasible by an exhaustive computational search.
    
\section{Uptori Constructed from Upcycles}\label{sec:results-constructed-uptori}

    In this section, we offer a construction of uptori similar to the constructions of De Bruijn tori developed in \cite{Cock1988} and \cite{Kreitzer2024}. We define a torus whose rows are particular relative rotations of a given cyclic partial word and then show that for a suitable cyclic partial word and suitable sequence of relative rotations, the resulting torus is a universal partial torus.

    \begin{defn}\label{def:matrix-m(u,s)}
        For a given cyclic partial word $u$ and a word $s$ over the alphabet $\set{0, 1, \ldots, \abs{u}-1}$, the matrix $m(u,s)$ is a rectangular matrix whose rows are defined by a recursive function $\phi$:

        \[
            \phi_n = \begin{cases}
                \sigma^{s_0}(u) & n = 0\\
                \sigma^{s_{n}}(\phi_{n-1}) & n \in \{1 , \ldots , \abs{s}-1\}.
            \end{cases}
        \]
        Matrix $m(u,s)$ is then given by the concatenation of those rows:
        \[
            m(u,s) = \left[\begin{matrix}
                \phi_0\\
                \phi_1\\
                \vdots\\
                \phi_{\abs{s}-1}
            \end{matrix}\right]
        \]
    \end{defn}

    Note that $m$ has exactly $\abs{s}$ rows and exactly $\abs{u}$ columns, and the same alphabet as $u$. For a given cyclic partial word $u$, we can construct infinitely many different matrices $m$.

    \begin{example}[Matrix $m(u,s)$]\label{ex:upcycle-uptorus}
        Let $u = 001\diam110\diam$ and
        \[
            s = 0017020304050607112722313733414247445152535755616263646776654321.
        \]
        We construct $m(u,s)$ and represent $m(u,s)^{\intercal}$ as an image, with the black pixels representing $0$'s, the light grey pixels $1$'s, and the red pixels $\diam$'s. (We return to this example in \cref{rem:smallestupcycle} and \cref{ex:smalluptorus} and use the fact that $u$ is an upcycle for $\set{0,1}^4$ and $s$ is a De Bruijn cycle for $\set{0, \ldots, 7}^2$, along with \cref{thm:CoversAllWxLMatricesExactlyOnce}, to conclude that $m(u,s)$ is an uptorus for $3\times 4$ matrices.)

        \begin{figure}[!h]
            \centering
            \includegraphics[width=6.5in]{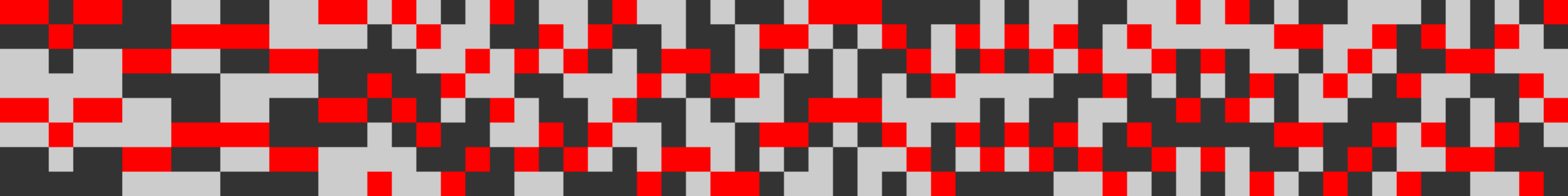}   
            \caption{The transpose of matrix $m(u,s)$}
        \label{fig:uptorus-4x3}
        \end{figure}
    \end{example}

    We use the following lemma, similar to \cite[Lemmas 2 and 3]{Kreitzer2024} and \cite{Cock1988}.
    \begin{lem}\label{lem:s0}
        For every cyclic partial word $u$ and De Bruijn cycle $s$ for $\set{0, 1, \ldots, \abs{u}-1}^y$ with $y \ge 2$, the bottom row of $m(u,s)$ is $u$. Thus, the relative rotations of cyclically consecutive rows of $m(u,s)$ are given by $s$.
    \end{lem}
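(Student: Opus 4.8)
The plan is to first obtain a closed form for each row $\phi_n$ by unwinding the recursion, then reduce the claim about the bottom row to a single arithmetic statement about the sum of the entries of $s$.

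First I would record two facts about the rotation operator: $\sigma^i \circ \sigma^j = \sigma^{i+j}$, and $\sigma^{\abs{u}}$ fixes $u$, so $\sigma^k(u)$ depends only on $k$ modulo $\abs{u}$. A one-line induction on $n$ then gives
\[
    \phi_n = \sigma^{s_0 + s_1 + \cdots + s_n}(u).
\]
Writing $a = \abs{u}$ and $S = \sum_{i=0}^{\abs{s}-1} s_i$, the bottom row of $m(u,s)$ is $\phi_{\abs{s}-1} = \sigma^{S}(u)$, which equals $u$ if and only if $a \mid S$. So the first assertion reduces entirely to showing $a \mid S$.

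The crux is a counting argument. Since $s$ is a De Bruijn cycle for $\set{0,\ldots,a-1}^y$, every length-$y$ word appears exactly once as a window, so the number of windows whose first symbol is $\ell$ equals the number of words in $\set{0,\ldots,a-1}^y$ beginning with $\ell$, namely $a^{y-1}$. Because the first symbol of the window starting at position $i$ is $s_i$, each letter $\ell$ occurs exactly $a^{y-1}$ times in $s$, and hence
\[
    S = \sum_{\ell=0}^{a-1} \ell \cdot a^{y-1} = a^{y-1} \cdot \frac{a(a-1)}{2}.
\]
Here the hypothesis $y \ge 2$ enters: since $y - 1 \ge 1$ we have $a \mid a^{y-1}$, and as $0 + 1 + \cdots + (a-1) = \tfrac{a(a-1)}{2}$ is an integer, $a$ divides $a^{y-1} \cdot \tfrac{a(a-1)}{2} = S$. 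Therefore $\sigma^S(u) = u$ and the bottom row is $u$. This also explains why $y \ge 2$ is needed, since for $y = 1$ and even $a$ the divisibility fails.

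Finally, the second assertion follows from the first. For $n \in \set{1, \ldots, \abs{s}-1}$ the definition gives $\phi_n = \sigma^{s_n}(\phi_{n-1})$, so the relative rotation carrying row $n-1$ to row $n$ is $s_n$. Treating the rows cyclically, the row preceding $\phi_0$ is the bottom row $\phi_{\abs{s}-1}$, which equals $u$ by the first part, while $\phi_0 = \sigma^{s_0}(u)$; hence the relative rotation across the wrap is $s_0$. Thus the sequence of relative rotations of cyclically consecutive rows is exactly $s_0 s_1 \cdots s_{\abs{s}-1} = s$. I expect the main obstacle to be the counting and divisibility step: seeing that each alphabet symbol occurs exactly $a^{y-1}$ times and that $y \ge 2$ is precisely what forces $a \mid S$. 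The remainder is routine bookkeeping with $\sigma$.
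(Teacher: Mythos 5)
Your proof is correct and follows essentially the same route as the paper's: both reduce the claim to showing $\abs{u}$ divides $\xi = \sum_i s_i$, count that each symbol occurs $\abs{u}^{y-1}$ times in the De Bruijn cycle $s$, and conclude $\xi = \abs{u}^{y-1}\binom{\abs{u}}{2} \equiv 0 \pmod{\abs{u}}$ using $y \ge 2$. Your explicit closed form for $\phi_n$ and the remark about why $y=1$ with even $\abs{u}$ fails are nice touches but do not change the argument.
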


    \begin{proof}
        The recursive definition of $m(u,s)$ guarantees that the linearly consecutive rows of $m(u,s)$ have relative rotations given by $s_1\cdots s_n$. It remains to show that the relative rotation from the bottom row of $m(u,s)$ to the top row is given by $s_0$, i.e., that the bottom row is $u$.

        Let $\xi = s_0 + s_1 + \ldots + s_{\abs{s}-1}$. The definition of $m(u,s)$ implies that a row with index $j$, where $0 \le j \le \abs{s}-1$, is rotated a total of $s_0 + \ldots +      s_j = \sum_{k=0}^j s_k$ units (mod $\abs{u}$), relative to $u$. The bottom row has index $\abs{s}-1$ and therefore is rotated $\xi$ units relative to $u$. Therefore, we only need to show that $\xi \equiv 0 \mod \abs{u}$.
       
        Since $s$ is a De Bruijn cycle, each number in $\set{0, 1, \ldots, \abs{u}-1}$ occurs the same number of times, which is the length of $s$ divided by the alphabet size of $s$, i.e., $\abs{u}^y/\abs{u} = \abs{u}^{y-1}$. Then 
            
        \[
            \xi = \sum_{k=0}^{\abs{u}-1} \abs{u}^{y-1}k = \abs{u}^{y-1}\sum_{k=0}^{\abs{u}-1}k = \abs{u}^{y-1}\binom{\abs{u}}{2}.
        \]
            
        As $\binom{\abs{u}}{2}$ is an integer and $y \ge 2$, we have that $\xi$ is an integer multiple of $\abs{u}$, and $\xi \equiv 0 \pmod{\abs{u}}$.
    \end{proof}

    Now we are ready to prove that $m(u,s)$, considered cyclically in both dimensions, is an uptorus for suitable inputs.
    
    \begin{thm}\label{thm:CoversAllWxLMatricesExactlyOnce}
        Given an upcycle $u$ for $\A^x$ and a De Bruijn cycle $s$ for $\set{0, \ldots, \abs{u}-1}^y$, $y \geq 2$, the torus $m(u,s)$ covers every distinct subarray in $\A^{(y+1) \times x}$ exactly once.
    \end{thm}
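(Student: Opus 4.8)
The torus $m(u,s)$ has $\abs{s} = \abs{u}^y$ rows and $\abs{u}$ columns, so there are exactly $\abs{u}^{y+1}$ positions at which a $(y+1)\times x$ subarray can begin, indexed by a top-row offset $r \in \set{0,\ldots,\abs{s}-1}$ and a left-column offset $c \in \set{0,\ldots,\abs{u}-1}$, both read cyclically. The plan is to prove the sharper statement that \emph{for every matrix $t \in \A^{(y+1)\times x}$ there is exactly one position $(r,c)$ whose subarray covers $t$}. Since a fixed partial matrix covers a given full matrix in at most one way (every $\diam$ is forced to the corresponding entry of $t$, and each non-$\diam$ entry must already agree), this immediately yields that $t$ is covered exactly once overall, which is what the theorem asserts.

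The engine of the argument is to rewrite each row of a subarray as a window of $u$. By \cref{def:matrix-m(u,s)} and $\sigma^a\circ\sigma^b = \sigma^{a+b}$, row $n$ of $m(u,s)$ is $\phi_n = \sigma^{P_n}(u)$, where $P_n = \sum_{k=0}^{n} s_k \pmod{\abs{u}}$. Hence the length-$x$ block in row $r+i$ beginning at column $c$ is precisely the length-$x$ window of the cyclic partial word $u$ beginning at position $c + P_{r+i} \pmod{\abs{u}}$. Reading the row index $r+i$ modulo $\abs{s}$, the starting positions of the $y+1$ rows of the subarray are $c+P_r,\, c+P_{r+1},\, \ldots,\, c+P_{r+y}$, whose successive differences are $s_{r+1}, s_{r+2}, \ldots, s_{r+y}$ --- a length-$y$ window of the cyclic word $s$. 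This is exactly where \cref{lem:s0} is needed: it guarantees that the relative rotation between every pair of cyclically consecutive rows (including the wrap from the bottom row back up to the top) is the appropriate entry of $s$, so that these differences are honest cyclic windows of $s$ even when the subarray wraps around vertically.

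I would then apply the two universal properties in turn. Fix $t \in \A^{(y+1)\times x}$ with rows $t_0,\ldots,t_y \in \A^x$. Because $u$ is an upcycle for $\A^x$, each $t_i$ is covered by a unique window of $u$, say the one beginning at position $q_i \in \set{0,\ldots,\abs{u}-1}$. Covering of the subarray is row-by-row, so the subarray at $(r,c)$ covers $t$ if and only if $c+P_{r+i} \equiv q_i \pmod{\abs{u}}$ for all $i$. Taking consecutive differences, this system is equivalent to the single tuple condition $(s_{r+1},\ldots,s_{r+y}) = (q_1-q_0,\, \ldots,\, q_y-q_{y-1})$ modulo $\abs{u}$, together with the one anchoring equation $c \equiv q_0 - P_r \pmod{\abs{u}}$. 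Since $s$ is a De Bruijn cycle for $\set{0,\ldots,\abs{u}-1}^y$, each $y$-tuple occurs as a window of $s$ exactly once, so there is a unique $r$ satisfying the tuple condition; the anchoring equation then determines $c$ uniquely in $\set{0,\ldots,\abs{u}-1}$. Thus exactly one position $(r,c)$ covers $t$, as required.

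The substance of the proof is concentrated in the bookkeeping of the second step: choosing consistent conventions for the direction of $\sigma$ and the modular reductions so that the inter-row offsets read off a subarray are \emph{literally} a window of $s$, and invoking \cref{lem:s0} to handle the vertical wrap-around (where the cumulative rotation around a full column of rows is a multiple of $\abs{u}$ and hence vanishes modulo $\abs{u}$). Once that correspondence is pinned down, the conclusion follows from a clean, counting-free double application of the defining properties --- the upcycle property fixes the within-row positions $q_i$, and the De Bruijn property fixes the single inter-row offset pattern --- leaving no real calculation beyond verifying the telescoping of the positions $P_{r+i}$.
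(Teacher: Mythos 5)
Your proposal is correct and follows essentially the same route as the paper's proof: rows of the subarray are windows of $u$ located uniquely by the upcycle property, their successive differences form a $y$-tuple located uniquely in $s$ by the De Bruijn property, and \cref{lem:s0} handles the vertical wrap-around. Your reformulation as a system of congruences with a unique solution $(r,c)$ is a slightly cleaner packaging of the paper's existence-plus-uniqueness argument, but it is not a different method.
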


    \begin{proof}
        For an arbitrary matrix $P$ in $\A^{(y+1) \times x}$, we define $f(P)$ as follows. Note that $P$ has as many columns as the length of words that $u$ covers (i.e. $x$ columns) and that the alphabet of $P$ is the alphabet of $u$. Let $P_0, \ldots, P_y$ be the rows of $P$. Because each row of $P$ is a word in $\A^x$, each row of $P$ is covered by $u$ in exactly one place.
        
        For each row of $P$, say, $P_n$, there is exactly one integer $a_n \in \set{0 , \ldots , |u|-1}$ such that $\sigma^{a_n}(u)$ covers $P_n$ in its first $x$ characters. We use the sequence $(a_n)_{n=0}^{y}$ to construct a sequence of differences, $b = (b_n)_{n=0}^{y-1}$, where $b_n = a_{n+1} - a_{n}$ for $n \in \set{0 , \ldots , y-1}$. Note that $b \in \set{0,\ldots,\abs{u}-1}^y$, and therefore $b$ appears in $s$ in exactly one place, say $b = s_{i}\cdots s_{i+y-1}$, where $i \in \set{0, \ldots, \abs{s}-1}$. Then $f(P):=\left(i-1,a_0 - \sum_{k=0}^{i-1}s_k\right)$.
        
        Now we show that $P$ is covered in the $(y+1)\times x$ subarray of $m(u,s)$ whose top-left corner has coordinates $f(P)$. By the definition of $m(u,s)$ and \cref{lem:s0}, for every integer $q'$, the ${q'}^\text{th}$ row of $m(u,s)$ is the $q:=(q' \mod \abs{s})$th row of $m(u,s)$, which is $\phi_{q} = \sigma^{\sum_{k=0}^{q}s_k} (u)$, and we let $\phi_{q'}:=\phi_q$. Recall also that we defined $i$ in such a way that for every integer $0 \le n \le y-1$, we have $s_i\cdots s_{i-1+n} = b_0 \cdots b_{n-1}$.
        \begin{align*}
            \sigma^{a_0 - \sum_{k=0}^{i-1}s_k}(\phi_{i-1+n}) &= \sigma^{a_0 - \sum_{k=0}^{i-1}s_k}(\sigma^{\sum_{k=0}^{i-1+n}s_k}(u))\\
            &= \sigma^{a_0+\sum_{k=i}^{i-1+n}s_k}(u)\\
            &= \sigma^{a_0+\sum_{k=0}^{n-1}b_k}(u)\\
            &= \sigma^{a_n}(u)\quad\text{by the definition of }b_n,
        \end{align*}
        and $P_n$ is covered at the beginning of $\sigma^{a_n}(u)$ by the definition of $a_n$. The column index at the beginning of where $P_n$ is covered within the $i-1+n$th row of $m(u,s)$ is the amount by which we shift $\phi_{i-1+n}$ to get $P_n$ at the beginning, so for every $n$ the row $P_n$ is covered in $m(u,s)$ starting at index $a_0 - \sum_{k=0}^{i-1}s_k$.

        In any other set of $y+1$ consecutive rows of $m(u,s)$, the relative rotations of $u$ in those rows are different (because they are given by a De Bruijn cycle, $s$), so these rows do not cover $P$. Hence, $P$ is covered exactly once in $m(u,s)$.
    \end{proof}

    \begin{rem}\label{rem:smallestupcycle}
        Every upcycle has subword length $x \ge 4$ (proven in \cite{Goeckner2018}, Corollary 4.12 and Proposition 5.1), and $y+1 \ge 3$ is a hypothesis of \cref{thm:CoversAllWxLMatricesExactlyOnce}, so the smallest uptori that can be constructed by \cref{thm:CoversAllWxLMatricesExactlyOnce} are for $3 \times 4$ matrices, like the one shown in \cref{ex:upcycle-uptorus}.
        
        It is not known whether any upcycles exist for odd alphabet sizes. If they do, then a modified form of \cref{lem:s0} may allow for the construction of uptori with $2\times x$ windows, using upcycles over odd alphabets.
    \end{rem}

    \begin{rem}
        If an upcycle exists for a given alphabet and window length, then this construction produces uptori for subarrays whose width is that window length and whose height is greater than two.
    \end{rem}
    
    \begin{example}\label{ex:smalluptorus}
        Consider matrix $m(u,s)$ where $u = 001\diam110\diam$, an upcycle for $\set{0,1}^4$, and $s$ is a De Bruijn cycle for $\set{0, \ldots, 7}^2$, as in \cref{ex:upcycle-uptorus}. Let $P$ be the $3 \times 4$ matrix:
        
            \[
                \left[\begin{matrix}
                    0 & 0 & 1 & 1\\
                    1 & 0 & 1 & 0\\
                    1 & 0 & 0 & 1
                \end{matrix}\right]
            \]

        Each row of $P$ is covered in the first four characters of some rotation of $u$:
        \begin{multicols}{3}
            $\sigma^{0}(u) = \textcolor{red}{001\diam}110\diam$
        
            $\sigma^{5}(u) = \textcolor{red}{10\diam0}01\diam1$
        
            $\sigma^{7}(u) = \textcolor{red}{\diam001}\diam110$
        \end{multicols}
            
        We can then locate $P$ by finding $a_0 = 0$, $a_1 = 5$, and $a_2 = 7$. The sequence of differences $b$ is $b_0 = a_1 - a_0 = 5$ and $b_1 = a_2 - a_1 = 2$. Examining $m(u,s)$, we find two pairs of consecutive rows whose relative rotations are 5 and 2 respectively:

        \[
            \begin{matrix}
            \phi_{0}&0&0&1&\diam&1&1&0&\diam \\

            \vdots\\
            
            \phi_{i-3}& 1& 1& 0& \diam& 0& 0& 1& \diam \\

            \phi_{i-2}& 0& 1& \diam& 1& 1& 0& \diam& 0 \\

            \phi_{i-1}& \p{1}& \p{\diam}& 1& 1& 0& \diam& \p{0}& \p{0} \\

            \phi_{i}& \p{\diam}& \p{0}& 0& 1& \diam& 1& \p{1}& \p{0} \\

            \phi_{i+1}& \p{0}& \p{1}& \diam& 1& 1& 0& \p{\diam}& \p{0} \\

            \phi_{i+2}& 0& \diam& 0& 0& 1& \diam& 1& 1 \\

            \phi_{i+3}& 0 &1 &\diam &1 &1 &0 &\diam &0\\
            \vdots
            \end{matrix}
        \]

        The location of $P$ in $m(u,s)$ is highlighted in red.

    \end{example}

    \begin{cor}\label{cor:infinitude}
        \cref{thm:CoversAllWxLMatricesExactlyOnce} constructs infinitely many nontrivial uptori. In particular, for all even integers $a\ge 2$, all integers $y \ge 2$, and $x \in \set{4,8}$, \cref{thm:CoversAllWxLMatricesExactlyOnce} constructs at least $(((a^{x-1})!)^{a^{(x-1)(y-1)}})/a^{(x-1)y}$ nontrivial uptori for $\set{0,\ldots,a-1}^{(y+1)\times x}$.
    \end{cor}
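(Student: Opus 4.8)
The plan is to hold the upcycle fixed and count the De Bruijn cycles that can be fed into \cref{thm:CoversAllWxLMatricesExactlyOnce} alongside it. Fix an even integer $a \ge 2$ and $x \in \set{4,8}$, and set $\A = \set{0,\ldots,a-1}$. By \cref{thm:upcycleexist}, there is a nontrivial upcycle $u$ for $\A^x$ of diamondicity $1$ and length $\abs{u} = a^{x-1}$. For each $y \ge 2$ and each De Bruijn cycle $s$ for $\set{0,\ldots,\abs{u}-1}^y$, \cref{thm:CoversAllWxLMatricesExactlyOnce} yields an uptorus $m(u,s)$ for $\A^{(y+1)\times x}$. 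Since $\abs{u} = a^{x-1} \ge 2$, \cref{thm:dbcount} (applied with alphabet size $\abs{u}$ and order $y$) says there are exactly $(\abs{u}!)^{\abs{u}^{y-1}}/\abs{u}^{y} = ((a^{x-1})!)^{a^{(x-1)(y-1)}}/a^{(x-1)y}$ such cycles; call this quantity $N$, which is exactly the asserted lower bound.

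It remains to verify that distinct De Bruijn cycles $s$ produce distinct uptori, so that this count is genuinely a lower bound; I expect this injectivity to be the main obstacle. The tool is \cref{lem:s0}, which says that the cyclic sequence of relative rotations between cyclically consecutive rows of $m(u,s)$ is exactly $s$. For this sequence to be well defined I first argue that $u$ is \emph{primitive}: if $\sigma^{d}(u) = u$ for some $0 < d < \abs{u}$, then for every $i$ the length-$x$ window of $u$ starting at position $i$ equals the window starting at position $i+d$, so each word of $\A^x$ covered at position $i$ would also be covered at the distinct position $i+d$, contradicting the defining property that an upcycle covers every word exactly once. Hence the rotation carrying one row of $m(u,s)$ to the next is a uniquely determined element of $\set{0,\ldots,\abs{u}-1}$, and we may recover the relative-rotation sequence from the torus.

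Because an uptorus is cyclic in both directions, the final point is that this recovered sequence is independent of how the torus is displayed. A horizontal cyclic shift sends every row $\phi_n$ to $\sigma^{c}(\phi_n)$ for a common $c$ and so leaves every relative rotation unchanged, while a vertical cyclic shift merely rotates the row order and hence rotates the sequence of relative rotations without altering it as a cyclic word. Thus the cyclic word $s$ is an invariant of $m(u,s)$, and $m(u,s) = m(u,s')$ forces $s = s'$. The $N$ De Bruijn cycles therefore give $N$ distinct uptori; each is nontrivial since it has $y+1 \ge 3$ rows, $x \ge 4$ columns, and---because $u$ has diamondicity $1$---both a $\diam$ and a letter of $\A$ in every window. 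Finally, letting $y$ range over $\set{2,3,\ldots}$ yields uptori of infinitely many distinct sizes, which establishes the infinitude claim.
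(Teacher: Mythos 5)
Your proof is correct and follows essentially the same route as the paper: fix the upcycle $u$ from \cref{thm:upcycleexist}, count De Bruijn cycles via \cref{thm:dbcount}, and use \cref{lem:s0} to recover $s$ from $m(u,s)$ so that distinct cycles give distinct uptori. In fact you supply a detail the paper leaves implicit---that $u$ is primitive, so the relative rotation between consecutive rows is well defined---which only strengthens the injectivity step.
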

    \begin{proof}
        By \cref{thm:upcycleexist}, there is at least one nontrivial upcycle $u$ for $\set{0,\ldots,a-1}^x$ of length $\abs{u}=a^{x-1}$. Then there are $(((a^{x-1})!)^{a^{(x-1)(y-1)}})/a^{(x-1)y}$ De Bruijn cycles $s$ for $\set{0, \ldots, \abs{u}-1}^y$ by \cref{thm:dbcount}. For each such pair $(u,s)$, \cref{thm:CoversAllWxLMatricesExactlyOnce} constructs a nontrivial uptorus $m(u,s)$ for $\set{0,\ldots,a-1}^{(y+1)\times x}$.
        
        Given an uptorus $m$ constructed in this way, $a$ can be determined by the number of distinct characters in $m$ (each letter $j \in \set{0, \ldots, a-1}$ must appear in $u$, and therefore in $m$, because otherwise $j^x$ would be covered in $u$ by $\diam^x$, which covers all words of length $x$, and the upcycle $u$ would be trivial). Then $x$ can be determined because the number of columns of $m$ is $\abs{u} = a^{x-1}$. Finally $y$ can be determined because the number of rows of $m$ is $\abs{u}^y = a^{(x-1)y}$. Therefore distinct values of $a$, $x$, and $y$ produce distinct uptori, and the total number of nontrivial uptori constructed by \cref{thm:CoversAllWxLMatricesExactlyOnce} is infinite.
        
        For a given triple $(a,x,y)$ and a given upcycle $u$ for $\set{0,\ldots,a-1}^x$, distinct De Bruijn cycles $s$ for $\set{0, \ldots, a^{x-1}-1}^y$ also produce distinct uptori by the definition of $m(u,s)$ and \cref{lem:s0}: the relative rotations of cyclically consecutive rows of $m(u,s)$ are given by $s$.
    \end{proof}

    \begin{thm}\label{lem:DefinedDiamondicity}
        Given an upcycle $u$ for $\A^x$ and a De Bruijn cycle $s$ for $\set{0, \ldots, \abs{u}-1}^y$ with $y \ge 2$, the torus $m(u,s)$ has well-defined diamondicity. That is, every $(y+1) \times x$ subarray of $m$ has the same number of wildcard characters.
    \end{thm}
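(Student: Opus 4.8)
The plan is to reduce the diamond count of an arbitrary $(y+1)\times x$ subarray to diamond counts of windows of the upcycle $u$, and then invoke the fact that every upcycle has well-defined diamondicity \cite[Lemma 14]{chen2017universal}. Let $d$ denote the diamondicity of $u$, so that every window of $x$ cyclically consecutive symbols of $u$ contains exactly $d$ copies of $\diam$.

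First I would fix an arbitrary $(y+1) \times x$ subarray $P$ of $m(u,s)$; by \cref{def:subarray} it occupies $y+1$ cyclically consecutive rows and $x$ cyclically consecutive columns, say rows $\phi_{j}, \ldots, \phi_{j+y}$ (row indices read modulo $\abs{s}$) and columns $c, c+1, \ldots, c+x-1$ (column indices read modulo $\abs{u}$). By \cref{def:matrix-m(u,s)} and \cref{lem:s0}, each of these rows is a rotation $\sigma^{r}(u)$ of $u$ for some shift $r$ depending on the row, so that the entry in column $c'$ of that row equals $u_{(c'+r) \bmod \abs{u}}$.

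The key observation is that the restriction of such a row to the $x$ chosen columns is exactly the length-$x$ window of the cyclic word $u$ beginning at position $(c+r) \bmod \abs{u}$. Since $u$ has diamondicity $d$, that window contains exactly $d$ copies of $\diam$, regardless of $r$ or $c$. Summing over the $y+1$ rows of $P$ therefore yields exactly $(y+1)d$ wildcard characters, a quantity independent of the choice of $j$ and $c$. Hence every $(y+1)\times x$ subarray of $m(u,s)$ contains the same number $(y+1)d$ of wildcards, so the diamondicity of $m(u,s)$ is well-defined and equals $(y+1)d$.

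I expect no serious obstacle here: the only point requiring care is the bookkeeping that restricting a rotated copy of $u$ to $x$ consecutive columns yields a genuine window of the cyclic word $u$ (in particular that the modular wraparound in the torus columns matches the cyclic wraparound of $u$), which is immediate from the definition of $\sigma$ and the fact that $m(u,s)$ has exactly $\abs{u}$ columns read cyclically. The inequality $y+1 \le \abs{s} = \abs{u}^y$ guarantees that the selected rows are well defined, but even a repeated row would still contribute $d$ diamonds, so the conclusion is unaffected.
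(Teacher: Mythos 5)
Your argument is correct and follows essentially the same route as the paper's own proof: each row of the subarray is an $x$-window of (a rotation of) the upcycle $u$, hence contains exactly $d$ diamonds by the well-defined diamondicity of upcycles, giving $(y+1)d$ in total. You simply carry out the rotation bookkeeping more explicitly than the paper, which states the same facts in one line.
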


    \begin{proof}
        Let $P$ be an arbitrary $(y+1)\times x$ subarray of $m(u,s)$. Each row of $P$ is a partial word contained as a substring in $u$. As noted in \cref{sec:prelim}, every upcycle has well-defined diamondicity. Let $d$ be the diamondicity of $u$. Then each row of $P$ contains $d$ wildcard characters, and $P$ itself contains $d(y+1)$ wildcard characters. As $P$ was arbitrary, $m(u,s)$ has diamondicity $d(y+1)$.
    \end{proof}

\section{Uptori Constructed from Universal Partial Families}\label{sec:upfamilies}

    In this section we offer another construction of uptori similar to the construction of De Bruijn tori developed by Kreitzer et al. \cite{Kreitzer2024}. They use De Bruijn families: sets of cyclic words that, when taken as a group, have the covering property of a De Bruijn cycle. We extend that notion to sets of cyclic words that include $\diam$'s: universal partial families.

    We then define a matrix whose rows are particular rotations of particular elements of an universal partial family, and then show that the resulting matrix, read cyclically, is a universal partial torus.

    \begin{defn}[Universal Partial Family]\label{def:upfamily}
        A \emph{universal partial family} (or \emph{upfamily}), for $\A^{\ell}$ is a set $\mathcal{S}$ of cyclic partial words over the alphabet $\A$ such that every element of $\mathcal{S}$ has the same length, and every word in $\A^{\ell}$ is covered by exactly one element of $\mathcal{S}$, and is covered exactly once in that element.
    \end{defn}

        If $\abs{\A} = 1$, then the only upfamilies are $\set{0}$ and $\set{\diam}$, which we call trivial and exclude from our consideration here.

    \begin{example}[Universal Partial Family for $\set{0,1,2,3}^4$]\label{ex:upfamily}
        The set
        \begin{align*}
            \mathcal{F} = \set{\,
             &001\diam110\diam, \quad
             003\diam112\diam, \quad
             021\diam130\diam, \quad
             023\diam132\diam,\\
             &201\diam310\diam,\quad
             203\diam312\diam,\quad
             221\diam330\diam,\quad
             223\diam332\diam\,
            }
        \end{align*}
        covers every word in $\set{0,1,2,3}^4$ exactly once.
    \end{example}

    The upfamily in \cref{ex:upfamily} is derived from a related upcycle, first constructed in \cite{Goeckner2018}, and in \cite{Fillmore2023} shown to be the result of using the alphabet multiplier theorem with the upcycle $001\diam110\diam$ and multiplier $k=2$:
    \begin{equation}\label{eq-upcycle}
        u = 001\diam110\diam003\diam112\diam021\diam130\diam023\diam132\diam201\diam310\diam203\diam312\diam221\diam330\diam223\diam332\diam.
    \end{equation}

    \begin{rem}
    Note that the elements of $\F$ appear consecutively in $u$, and that, by inspection, each word in $\set{0,1,2,3}^4$ appears exactly once in exactly one element of $\F$. Particular words are not necessarily covered by the same parts of $\F$ and $u$. For example, $2000$ is covered in element $\textcolor{red}{00}3\diam11\textcolor{red}{2\diam}$ in $\F$, but the corresponding substring of $u$, $\ldots1\textcolor{red}{0\diam00}3\diam11\textcolor{red}{2\diam02}1\ldots$, does not cover $2000$.
    \end{rem}

    \begin{conj}\label{conj:upfamily-from-alphabet-multiplier}
        If $v$ is an upcycle constructed by the alphabet multiplier theorem, then there exists at least one upfamily $\F$ constructed by slicing $v$ into identically sized elements.
    \end{conj}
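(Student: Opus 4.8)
The plan is to slice $v$ at the block boundaries that the alphabet multiplier theorem already builds into it, and then reduce the upfamily property to a clean combinatorial condition on the ``high digits'' of those blocks. Write $\A=\set{0,\dots,a-1}$ for the seed alphabet, let $u$ be the seed upcycle for $\A^{n}$ of diamondicity $d$ and length $L=a^{\,n-d}$, and let $k$ be the multiplier, so that $v$ is an upcycle for $\set{0,\dots,ka-1}^{n}$ of length $(ka)^{\,n-d}=k^{\,n-d}L$. The construction presents $v$ as a concatenation of $m=k^{\,n-d}$ blocks $B_0,\dots,B_{m-1}$, each of length $L$; the proposed slicing cuts there, giving the candidate family $\F=\set{\tilde B_0,\dots,\tilde B_{m-1}}$ of equal-length cyclic partial words (read each $B_i$ cyclically). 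The key bookkeeping device is the digit decomposition $c=qa+r$ of each enlarged letter into a low digit $r\in\A$ and a high digit $q\in\set{0,\dots,k-1}$ (a $\diam$ has neither); this splits every target word $W\in\set{0,\dots,ka-1}^{n}$ into a low word $\ell(W)\in\A^{n}$ and a high word $h(W)\in\set{0,\dots,k-1}^{n}$. The first step I would verify from the construction is that the low digits of $v$ read off as $u$ repeated $m$ times, so each block $B_i$ has low digits exactly $u$, and in particular all blocks share the same $d$ diamond positions.

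Next I would pin down where a word can be covered. Because each $\tilde B_i$ has low digits $u$ and $u$ is an upcycle, $\ell(W)$ is covered by $u$ at a single offset $j(W)$, and the $d$ diamonds of that window absorb whatever $W$ carries in those coordinates. Hence inside any piece $\tilde B_i$ the word $W$ can only be covered at offset $j(W)$, and it is covered there precisely when the high digits of $B_i$, read at the $n-d$ non-diamond positions of the offset-$j(W)$ window, agree with $h(W)$. This localizes the whole problem: $\F$ is an upfamily if and only if, for every offset $j$ and every prescription of high digits on the non-diamond positions of the length-$n$ window at $j$, exactly one block realizes that prescription. Equivalently, for each offset $j$ the assignment
\[
    B_i \longmapsto \bigl(\text{high digits of } B_i \text{ at the non-diamond positions of window } j\bigr)
\]
is a bijection from the $m=k^{\,n-d}$ blocks onto $\set{0,\dots,k-1}^{\,n-d}$. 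A diamondicity count confirms the two sides have equal size and that $\F$ has the correct total coverage $(ka)^{n}$, so bijectivity is exactly the missing ingredient.

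It is tempting to read this bijection off from the fact that $v$ is already an upcycle, and for every window that lies entirely inside a single block this works immediately, since such windows are identical in $v$ and in the pieces. The genuine difficulty is the wrapping windows: for an offset $j$ whose window runs off the end of a block, the piece $\tilde B_i$ closes the window with the head of $B_i$ itself, whereas $v$ closes it with the head of the next block $B_{i+1}$. These are different reads, and one can construct tail/head high-digit tables for which $i\mapsto(\mathrm{tail}_i,\mathrm{head}_{i+1})$ is a bijection (so $v$ is an upcycle) while $i\mapsto(\mathrm{tail}_i,\mathrm{head}_i)$ is not (so $\F$ fails). This shows the upfamily property cannot follow from the upcycle property of $v$ alone.

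The main obstacle, therefore, and the reason the statement is posed as a conjecture, is to establish the wrapping-window bijection directly from the high-digit layout produced by the alphabet multiplier theorem. The mechanism is visible in \cref{ex:upfamily}: there each block's high digits are periodic (period $4$) while the seed's single diamond sits at the residue $3 \bmod 4$, so the three non-diamond offsets of \emph{every} length-$4$ window --- wrapping or not --- hit the residues $0,1,2 \bmod 4$ exactly once and thus read a permutation of that block's defining high-digit triple, and those triples enumerate all of $\set{0,1}^{3}$ across the eight blocks. I would aim to prove the general statement by extracting the analogous property ``the diamonds avoid a complete residue system of the high-digit period, and the per-block defining vectors exhaust $\set{0,\dots,k-1}^{\,n-d}$'' from the multiplier construction; turning this periodicity-and-diamond-placement observation into a proof valid for every upcycle output by the theorem is the crux.
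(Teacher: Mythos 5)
This statement is \cref{conj:upfamily-from-alphabet-multiplier}, a conjecture: the paper offers no proof of it, only computational verification for the upcycle $001\diam110\diam$ with multipliers $k=2,3,4,5$, so there is no argument of the paper's to compare yours against. Your write-up should therefore be judged as a proposed proof of an open statement, and as such it has a genuine gap that you yourself flag. The reductions you carry out are sound and worthwhile: the block decomposition into $m=k^{n-d}$ pieces of length $L=a^{n-d}$, the low/high digit splitting, the observation that each block's low digits are the seed $u$ so that any word can be covered in a given piece at only the single offset $j(W)$ determined by $u$, the counting argument showing that ``no double coverage'' suffices, and the reformulation of the upfamily property as the per-offset bijectivity of $B_i\mapsto(\text{high digits at the non-diamond positions of window }j)$. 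Your analysis of why non-wrapping windows are handled by the upcycle property of $v$ while wrapping windows are not (the piece closes a wrapping window with $\mathrm{head}_i$ where $v$ closes it with $\mathrm{head}_{i+1}$) correctly isolates where the difficulty lives.

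But the decisive step --- showing that the high-digit layout actually produced by the alphabet multiplier theorem forces the wrapping-window map $i\mapsto(\mathrm{tail}_i,\mathrm{head}_i)$ to be a bijection --- is not proved; it is only illustrated on \cref{ex:upfamily} via the period-$4$ periodicity of the high digits and the placement of the single diamond, and then deferred as ``the crux.'' That is exactly the content of the conjecture, so the proposal reduces the problem and sharpens it without resolving it. To close the gap you would need to work from the precise statement of the alphabet multiplier construction (which this paper does not reproduce) and establish, for general seed diamondicity $d$ and general diamond positions, that the non-diamond coordinates of every length-$n$ window read off a complete, injectively block-indexed set of high-digit vectors; nothing in the proposal rules out the failure mode you yourself describe, in which $v$ is an upcycle but the slicing is not an upfamily. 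Note also that the paper's stronger \cref{conj:upfamily-pizza-slice} (arbitrary starting index and several slice lengths $k^i\abs{u}$) would require more than the single block-boundary slicing you analyze.
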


    We have computationally identified such upfamilies for the upcycle $001\diam110\diam$ and multipliers $k = 2$, $3$, $4$, and $5$. In fact, for these examples, the following stronger conjecture holds.

    \begin{conj}\label{conj:upfamily-pizza-slice}
        Let $v$ be an upcycle constructed by the alphabet multiplier theorem from upcycle $u$ and multiplier $k$. For every $0 \le i \le n-d$, if $v$ is sliced into cyclic partial words of length $k^i\abs{u}$ starting at any index of $v$, then the result is an upfamily.
    \end{conj}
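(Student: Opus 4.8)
The plan is to reduce the statement to a pure counting fact together with a structural claim about the ``high'' coordinate of the alphabet multiplier construction, and then to isolate the wrap-around behavior of the slices as the genuine difficulty.

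First I would fix the bookkeeping. Write $a = \abs{\A}$, let $d$ be the (well-defined) diamondicity of $u$, so that $\abs{u} = a^{n-d}$, and set $\A' := \set{0,\ldots,ka-1}$. The alphabet multiplier theorem produces $v$ over $\A'$ for the same word length $n$ and the same diamondicity $d$, so $\abs{v} = (ka)^{n-d} = k^{n-d}\abs{u}$; hence for $0 \le i \le n-d$ the slice length $k^i\abs{u}$ divides $\abs{v}$ and yields exactly $k^{n-d-i}$ slices. The key first observation is that slicing is \emph{window-count preserving}: a cyclic word of length $L$ has exactly $L$ windows of length $n$, so the $k^{n-d-i}$ cyclic slices together have $k^{n-d-i}\cdot k^i\abs{u} = \abs{v}$ windows, the same number as $v$ itself. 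Since $v$ is an upcycle, its windows cover the $(ka)^n$ words of $(\A')^n$ exactly once, and each window (of diamondicity $d$) covers $(ka)^d$ words, so the total coverage of the slices is again $(ka)^n$. Consequently it suffices to prove that the slices cover \emph{every} word at least once (equivalently, that no word is covered twice): once the counts match, surjectivity and injectivity are equivalent.

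Next I would exploit the two-coordinate structure of $v$. Writing each letter $c \in \A'$ as $c = a\cdot c^{\mathrm{hi}} + c^{\mathrm{lo}}$ with $c^{\mathrm{lo}} \in \set{0,\ldots,a-1}$ and $c^{\mathrm{hi}} \in \set{0,\ldots,k-1}$ (with $\diam$ contributing $\diam$ to both coordinates), one extracts from the alphabet multiplier construction that the low coordinate of $v$ is periodic of period $\abs{u}$, tracing out copies of $u$. Because each slice has length $k^i\abs{u}$, a multiple of $\abs{u}$, the cyclic low coordinate of every slice is exactly the cyclic word $u$ (with each length-$n$ low-window appearing $k^i$ times), and the diamond positions are seamless across the wrap, so every window of every slice again has diamondicity $d$. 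Fixing a low-word $\ell \in \A^n$, there are then exactly $k^{n-d}$ windows across all slices whose low coordinate covers $\ell$, and each covers the $k^d$ full words with low part $\ell$ whose high coordinate matches it. Since there are exactly $k^n = k^{n-d}\cdot k^d$ full words with low part $\ell$, the whole statement reduces to the following claim about the high coordinate: for each fixed $\ell$, the high-parts of the $k^{n-d}$ windows covering $\ell$ realize every word in $\set{0,\ldots,k-1}^n$ exactly once.

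The hard part will be this high-coordinate claim for slices that start at an \emph{arbitrary} index. For block-aligned slices the high coordinate of each slice has the clean periodic form coming directly from the construction, and the claim follows from a bijection sending each high-word to the (block, diamond-value) pair that produces it. For an unaligned start, however, each cyclic slice contains $n-1$ wrap-around windows that are not windows of $v$; these splice the \emph{end} of a slice to the beginning of the \emph{same} slice, whereas the straddling $v$-windows they replace splice the end of one slice to the beginning of the \emph{next}. Summed over the $k^{n-d-i}$ slices this is a re-pairing of ``end fragments'' with ``start fragments,'' shifted by one slice, and the covered multiset is preserved only if the high coordinate of the construction has enough De Bruijn-like uniformity that the covered words depend on the fragments as a multiset rather than on their pairing. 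Making this precise is the crux, and it seems to require an explicit description of the high coordinate of $v$ (not reproduced in this excerpt) together with a careful matching of the lost straddling windows against the gained wrap-around windows boundary by boundary. The boundary case $i = n-d$ is trivial, as then the single slice equals $v$; I expect the general case to follow by establishing this high-coordinate uniformity and then running the counting reduction above, with the arbitrary-start wrap-around analysis absorbing essentially all of the work.
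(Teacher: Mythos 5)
The statement you are trying to prove is stated in the paper as a \emph{conjecture}: the authors offer no proof, only computational verification for the upcycle $001\diam110\diam$ with multipliers $k=2,3,4,5$, and they list finding a proof (or a characterization of which slicings work) as an open problem. So there is no argument in the paper to compare yours against, and any complete proof you produced would be new content rather than a reconstruction.

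That said, your proposal is not a complete proof, and you say so yourself. The reductions you do carry out are sound and worth keeping: the window-count argument (the $k^{n-d-i}$ cyclic slices have $\abs{v}$ windows in total, each of diamondicity $d$, so covering every word of $(\A')^n$ at least once is equivalent to covering each exactly once) is correct, and the observation that the low coordinate of $v$ is $u$ repeated with period $\abs{u}$ matches the structure of the alphabet multiplier construction as seen in the paper's \cref{eq-upcycle}, so the problem genuinely does reduce to showing that, for each fixed low-word $\ell\in\A^n$, the high coordinates of the $k^{n-d}$ windows covering $\ell$ realize $\set{0,\ldots,k-1}^n$ exactly once. But that reduction is where your argument stops. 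The crux --- that this high-coordinate uniformity survives when the slices start at an \emph{arbitrary} index, so that the $n-1$ wrap-around windows gained at each cut exactly compensate, as a multiset, for the straddling windows of $v$ that are lost --- is precisely the content of the conjecture, and you explicitly leave it unestablished (``Making this precise is the crux\ldots''). Note also that the arbitrary-start claim is the part the authors flag as delicate: they exhibit in \cref{sec:quasifamilies} a cut placement of the same upcycle that fails to be a UPQF because $32\diam2$ is covered twice, so the compensation you need cannot follow from counting alone and must use the specific form of the high coordinate of the multiplier construction, which you have not pinned down. As it stands the proposal is a plausible plan with the essential step missing, not a proof.
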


    For example, for the particular upcycle $u$ from \cref{eq-upcycle}, varying the starting index of the slicing produces different upfamilies:
    \begin{multicols}{3}
        \begin{align*}
            \mathcal{F} = \{\,
            & 001\diam110\diam, \\
            & 003\diam112\diam, \\
            & 021\diam130\diam, \\
            & 023\diam132\diam,\\
            & 201\diam310\diam,\\
            & 203\diam312\diam,\\
            & 221\diam330\diam,\\
            & 223\diam332\diam\,\}
        \end{align*}

        \begin{align*}
            \mathcal{F'} = \{\,
            & 01\diam110\diam0, \\
            & 03\diam112\diam0, \\
            & 21\diam130\diam0, \\
            & 23\diam132\diam2,\\
            & 01\diam310\diam2,\\
            & 03\diam312\diam2,\\
            & 21\diam330\diam2,\\
            & 23\diam332\diam0\,\}
        \end{align*}

        \begin{align*}
            \mathcal{F''} = \{\,
            & 1\diam110\diam00, \\
            & 3\diam112\diam02, \\
            & 1\diam130\diam02, \\
            & 3\diam132\diam20,\\
            & 1\diam310\diam20,\\
            & 3\diam312\diam22,\\
            & 1\diam330\diam22,\\
            & 3\diam332\diam00\,\}
        \end{align*}
    \end{multicols}

    Additionally, $u$ can be sliced into an upfamily with four members:
    \begin{align*}
        \mathcal{F} = \{\,
        & 001\diam110\diam003\diam112\diam, \\
        & 021\diam130\diam023\diam132\diam, \\
        & 201\diam310\diam203\diam312\diam,\\
        & 221\diam330\diam223\diam332\diam\,\}
    \end{align*}
    
    Having identified several upfamilies we now turn to their use in a construction of uptori.

    \begin{defn}\label{def:m-W}
        Let $\F = \set{F_0, \ldots, F_{\abs{\F}-1}}$ be an upfamily for $\A^x$ and $s$ the set $\set{0, 1, \ldots , \abs{F_0} - 1}$. Let $\W$ be an alternating word for alphabet pair $(\F, s)$ whose length is $2v + 1$. For clarity's sake, we index $\W$ like so:
        \[
            \W = f_0 r_0 f_1 r_1 \cdots r_{v-1} f_{v}
        \]
        with $f_i \in \F$ for all $i \in \set{0, \ldots, v}$ and $r_j \in s$ for all $j \in \set{0, \ldots, v-1}$. The function $\phi$ is defined as follows:

            \[
                \phi_n = 
                \begin{cases}
                    f_0 & n = 0\\
                    \sigma^{\sum_{i=0}^{n-1}{r_i}}(f_{n}) & n \in \set{1 , \ldots , v}
                \end{cases}
            \]

        Matrices $m'(\W)$ and $m(\W)$ are then given by the concatenation of those rows, with the latter having one fewer row:
            \[
                m'(\W) = 
                    \begin{bmatrix}
                        \phi_0\\
                        \phi_1\\
                        \vdots\\
                        \phi_{v}
                    \end{bmatrix}\quad\text{and}\quad
                m(\W) = 
                    \begin{bmatrix}
                        \phi_0\\
                        \phi_1\\
                        \vdots\\
                        \phi_{v-1}
                    \end{bmatrix}.
            \]
    \end{defn}        

    We show that if $\F$ is an upfamily for $\A^x$, $s$ is the set $\set{0, 1, \ldots , \abs{F_0} - 1}$, and alternating word $\W$ for alphabet pair $(\F,s)$ is an unrolled alternating De Bruijn cycle of order $2y+1$, with $y \geq 2$, then the first row of matrix $m'(\W)$ is equal to the last row, and matrix $m(\W)$ can be treated as the torus with $v$ rows and $\abs{F_0}$ columns resulting from gluing together these first and last rows of $m'(\W)$. We show that $m(\W)$ is furthermore an uptorus for $\A^{(y+1)\times x}$ (though note that $m'(W)$ is not an upmatrix; it would be necessary to unroll $m(\W)$ further in both horizontal and vertical directions to obtain an upmatrix). First, we establish the conditions for $m(\W)$ to be a cyclic form of $m'(\W)$ in the following lemma, similar to  \cref{lem:s0} and \cite[Lemmas 2 and 3]{Kreitzer2024}.

    \begin{lem}\label{lem:m(W)-is-torus}
        Let $\W$ be an unrolled alternating De Bruijn cycle of order $2y+1$ on the alphabet pair $(\F,s)$. If either
        \begin{enumerate}[(a)]
            \item $y\geq2$, or
            \item the number of elements of the upfamily, $\abs{\F}$, is even,
        \end{enumerate}
        then $\phi_v = f_0 = \phi_0$.
    \end{lem}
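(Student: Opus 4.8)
The conclusion $\phi_v = f_0 = \phi_0$ has two logically separate parts: that the final $\F$-symbol $f_v$ coincides with the initial one $f_0$, and that the net rotation applied to $f_v$ in the definition of $\phi_v$ is trivial. The plan is to dispatch these in turn. For the first part, I would simply unwind \cref{def:audbc}. Writing the underlying cyclic alternating De Bruijn cycle as $A_0B_0A_1B_1\cdots A_VB_V$, its unrolled form $A_0B_0\cdots A_VB_VA_0$ both begins and ends with $A_0$. Matching this against $\W = f_0r_0f_1\cdots r_{v-1}f_v$ reads off $f_0 = A_0 = f_v$ at once (and, as a byproduct, $v = V+1$, with the list $(r_i)_{i=0}^{v-1}$ equal to $(B_j)_{j=0}^{V}$, i.e.\ exactly all of the $s$-symbols of the cycle). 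So this part is pure bookkeeping about where the unrolling cuts the cycle.

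For the second part, since $\phi_v = \sigma^{\sum_{i=0}^{v-1}r_i}(f_v)$ and $f_v = f_0$ has length $L := \abs{F_0}$, and $\sigma^L$ is the identity on length-$L$ cyclic words, it suffices to prove $\Xi := \sum_{i=0}^{v-1} r_i \equiv 0 \pmod{L}$. This plays exactly the role of $\xi$ in \cref{lem:s0}, and the only genuinely new ingredient is evaluating $\Xi$. By the first paragraph, $\Xi$ is the sum of all $s$-symbols appearing in the cyclic alternating De Bruijn cycle. I would compute this by noting that a length-$(2y+1)$ window starting at the $\F$-position $A_k$ is the alternating word $A_kB_kA_{k+1}\cdots A_{k+y}$, whose first $s$-symbol is $B_k$; as $k$ ranges over all $\F$-positions, $B_k$ ranges over every $s$-position exactly once. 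Hence $\Xi$ equals the sum over all alternating words (each occurring once) of their first $s$-symbol. Fixing that symbol to a value $c$ leaves $\abs{\F}^{y+1}L^{y-1}$ completions, so $\Xi = \abs{\F}^{y+1}L^{y-1}\sum_{c=0}^{L-1}c = \abs{\F}^{y+1}L^{y-1}\binom{L}{2}$, the analogue of the formula in \cref{lem:s0} but carrying an extra factor $\abs{\F}^{y+1}$.

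It then remains to verify $L \mid \Xi$ under each hypothesis. First I would rewrite $\Xi = L\cdot \tfrac{\abs{\F}^{y+1}L^{y-1}(L-1)}{2}$ and check that the fractional factor is an integer, i.e.\ that $\abs{\F}^{y+1}L^{y-1}(L-1)$ is even. Under (a), $y\ge 2$: if $L$ is odd then $L-1$ is even, while if $L$ is even then $L^{y-1}$ is even, so the product is even in either case. Under (b), $\abs{\F}$ even: then $\abs{\F}^{y+1}$ is already even. Either hypothesis forces $\Xi$ to be a multiple of $L$, whence $\phi_v = \sigma^{\Xi}(f_0) = f_0$, as required. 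I expect the only delicate point to be this final parity bookkeeping—in particular, recognizing that the factor of $2$ inside $\binom{L}{2}$ is precisely what necessitates the two separate hypotheses (the $L^{y-1}$ factor supplies the needed evenness only when $y\ge 2$, so for $y=1$ one must instead borrow it from $\abs{\F}$). The alternating/upfamily structure itself contributes only the routine counting behind the closed form for $\Xi$.
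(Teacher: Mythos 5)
Your proposal is correct and follows essentially the same route as the paper's proof: reduce to showing $\sum_{i=0}^{v-1} r_i \equiv 0 \pmod{\abs{F_0}}$, count that each element of $s$ occurs $\abs{\F}^{y+1}\abs{F_0}^{y-1}$ times via the De Bruijn property (windows indexed by their first $s$-symbol), obtain $\xi = \abs{\F}^{y+1}\abs{F_0}^{y}\cdot\tfrac{1}{2}(\abs{F_0}-1)$, and split on the two hypotheses. The only cosmetic difference is that in case (a) you argue by parity of $\abs{F_0}^{y-1}(\abs{F_0}-1)$ where the paper factors out $\abs{F_0}^{y-2}\binom{\abs{F_0}}{2}$; these are interchangeable.
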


    \begin{proof}
        Our construction of the unrolled alternating De Bruijn cycle guarantees that $f_0$ and $\phi_v$ are each some rotation of the same element of $\F$, so it suffices to show that the rotation applied to $f_0$ to obtain $\phi_v$ is equivalent to the identity function. Thus it suffices to show that $\xi:=\sum_{i=0}^{v-1}r_i$ is equivalent to $0 \pmod{\abs{F_0}}$, whenever either of the two conditions is met.

        The number of rotation indices is $v = \abs{\F}^{y+1} \cdot \abs{F_0}^y$ because this is the number of alternating words of length $2y+1$ on the alphabet pair $(\F,s)$.

        We show that each element of $s$ occurs equally often in $\W$. Index $\W$ as $\W = f_0 r_0 f_1 r_1 \cdots r_{v-1} f_{v}$. Let $\W_C$ be the alternating De Bruijn cycle $\W = f_0 r_0 f_1 r_1 \cdots r_{v-1}$, where the subscripts are considered mod $2v$ (because $\W_C$ is a cyclic object). Each location in $\W_C$ where an element $s_0\in s$ occurs corresponds to an alternating word whose second element (i.e. whose first element from the alphabet $s$) is in the location that is given. There are $\abs{\F}^{y+1} \cdot \abs{F_0}^{y-1}$ words of length $2y+1$ whose second element is $s_0$; each of those words corresponds to a unique location (up to equivalence mod $2v$) because $\W_C$ is an unrolled alternating De Bruijn cycle. Therefore, there are $\abs{\F}^{y+1} \cdot \abs{F_0}^{y-1}$ occurrences of $s_0$ in $\W_C$. As $\W$ and $\W_C$ differ only by an element of $\F$, they contain the same number of copies of $s_0$. Since $s_0$ was arbitrary, each element of $s$ occurs equally often.

        Therefore, the average rotation is $\frac{1}{2}\left(\abs{F_0}-1\right)$. The sum of the rotation indices is therefore 
        \begin{align*}
            \xi &= r_0 + r_1 + \cdots + r_{v-1} \\
            &= v\cdot \frac{1}{2}\left(\abs{F_0}-1\right) \\
            &= \abs{\F}^{y+1}\cdot \abs{F_0}^y \cdot \frac{1}{2}\left(\abs{F_0}-1\right)\\
            &=\abs{F_0} \cdot \left( \abs{\F}^{y+1}\cdot \abs{F_0}^{y-1} \cdot  \frac{1}{2}\left(\abs{F_0}-1\right) \right)
        \end{align*}

        It is thus sufficient to show that $\abs{\F}^{y+1} \cdot \abs{F_0}^{y-1} \cdot  \frac{1}{2}\left(\abs{F_0}-1\right)$ is an integer, so that $\xi$ is a multiple of $\abs{F_0}$. 

        Condition (a) states that $y \geq 2$, so $\abs{F_0}^{y-2}$ is an integer, and therefore
        \[
            \abs{\F}^{y+1} \cdot \abs{F_0}^{y-1} \cdot  \frac{1}{2}\left(\abs{F_0}-1\right) = \abs{\F}^{y+1} \abs{F_0}^{y-2}\cdot \abs{F_0}\cdot \frac{1}{2}\cdot \left(\abs{F_0}-1\right) =  \abs{\F}^{y+1} \abs{F_0}^{y-2}\cdot\binom{\abs{F_0}}{2}
        \]
        is an integer. Therefore, the claim is true when condition (a) is met. 

        Condition (b) states that $\abs{\F}$ is even. Rewriting our expression for the sum of the rotation indices gives
        \[
            \xi =\left( \abs{\F}^{y+1}\cdot \frac{1}{2}\right)\cdot \abs{F_0}^{y-1}\cdot \left(\abs{F_0}-1\right)\cdot \abs{F_0}.
        \]

        Because $y+1\geq1$ and $\abs{\F}$ is even, $\left( \abs{\F}^{y+1} \cdot \frac{1}{2}\right)$ is an integer. Additionally, $y-1 \geq0$,  $\abs{F_0}^{y-1} \cdot \left(\abs{F_0}-1\right)$ is an integer. Therefore, $\xi$ is divisible by $\abs{F_0}$, and the claim is true when condition (b) is met.
    \end{proof}

    Now we can use upfamilies and alternating De Bruijn cycles to construct uptori.
    \begin{thm}\label{thm:m(W)-covers-exactly-once}
        If $\F$ is an upfamily for $\A^x$, $s = \set{0, 1, \ldots , x - 1}$, and $\W$ is an unrolled alternating De Bruijn cycle for alphabet pair $(\F,s)$ of order $2y+1$, then $m(\W)$ is an uptorus for $\A^{(y+1) \times x}$.
    \end{thm}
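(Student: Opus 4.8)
The plan is to follow the proof of \cref{thm:CoversAllWxLMatricesExactlyOnce} almost verbatim, with the single upcycle $u$ replaced by the upfamily $\F$ and the De Bruijn cycle of rotation indices replaced by the alternating De Bruijn cycle $\W$, while \cref{lem:m(W)-is-torus} supplies the extra ingredient needed to make $m(\W)$ a genuine torus. First I would apply \cref{lem:m(W)-is-torus}(a), valid since $y\ge 2$, to get $\phi_v=\phi_0$, so that $m(\W)$ with its $v=\abs{\F}^{y+1}\abs{F_0}^{y}$ rows read cyclically is a well-defined torus. Writing $R_j:=\sum_{k=0}^{j-1}r_k$ and using the rotation indices $s=\set{0,\dots,\abs{F_0}-1}$ of \cref{def:m-W}, row $j$ (index mod $v$) of the torus is $\phi_j=\sigma^{R_j}(f_j)$. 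Since the lemma gives $R_v=\sum_{k=0}^{v-1}r_k\equiv 0\pmod{\abs{F_0}}$, and both $(f_j)$ and $(r_k)$ repeat with period $v$ in the cyclic reading, the rotations close up across the vertical wrap, i.e.\ $\phi_{v+t}=\phi_t$ for all $t$.

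For existence, fix an arbitrary $P\in\A^{(y+1)\times x}$ with rows $P_0,\dots,P_y\in\A^{x}$. By the defining property of an upfamily (\cref{def:upfamily}), each $P_n$ is covered by a unique element $f^{(n)}\in\F$ and, within that element, at a unique rotation $a_n\in\set{0,\dots,\abs{F_0}-1}$, so that $\sigma^{a_n}(f^{(n)})$ covers $P_n$ in its first $x$ characters. Set $b_n:=a_{n+1}-a_n$ reduced modulo $\abs{F_0}$ for $0\le n\le y-1$; then each $b_n\in s$, and $\widehat{\W}:=f^{(0)}b_0f^{(1)}b_1\cdots b_{y-1}f^{(y)}$ is an alternating word of length $2y+1$. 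Since the alternating De Bruijn cycle underlying $\W$ contains every length-$(2y+1)$ alternating word exactly once, there is a unique $\F$-position $i$ (read mod $v$) at which $\widehat{\W}$ begins, i.e.\ $f_{i+n}=f^{(n)}$ for $0\le n\le y$ and $r_{i+n}=b_n$ for $0\le n\le y-1$.

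I would then verify, by the same telescoping computation displayed in \cref{thm:CoversAllWxLMatricesExactlyOnce}, that $P$ is covered in the window with row-start $i$ and column offset $c:=a_0-R_i$ reduced mod $\abs{F_0}$. For each $n$, the $n$-th row of this window is the first $x$ characters of $\sigma^{c+R_{i+n}}(f_{i+n})$; using $f_{i+n}=f^{(n)}$ and $R_{i+n}-R_i=\sum_{k=i}^{i+n-1}r_k=\sum_{m=0}^{n-1}b_m=a_n-a_0$, the exponent collapses to $a_n$, so this row is $\sigma^{a_n}(f^{(n)})$, which covers $P_n$. For uniqueness, suppose the window with row-start $i'$ and offset $c'$ also covers $P$. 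Applying the uniqueness clauses of \cref{def:upfamily} row by row forces $f_{i'+n}=f^{(n)}$ and $c'+R_{i'+n}\equiv a_n$; differencing consecutive equations forces $r_{i'+n}=b_n$, so the cyclic alternating window at $\F$-position $i'$ equals $\widehat{\W}$, whence $i'\equiv i\pmod v$, and the $n=0$ equation then forces $c'=c$. Thus $P$ is covered exactly once, and $m(\W)$ is an uptorus for $\A^{(y+1)\times x}$.

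I expect the main obstacle to be the bookkeeping at the vertical wrap: I must check that windows straddling the top/bottom boundary of $m(\W)$ correspond correctly, and exactly once, to the cyclic alternating windows of $\W$. This is precisely where unrolling together with \cref{lem:m(W)-is-torus} is indispensable — the conclusion $R_v\equiv 0\pmod{\abs{F_0}}$ is what makes $\phi_{v+t}=\phi_t$, so that row indices read mod $v$ yield a consistent torus and the wrapping windows inherit both the existence and the uniqueness arguments above. A minor point to dispatch along the way is that $\widehat{\W}$ is a legitimate alternating word, which holds because the $b_n$ are reduced into $s=\set{0,\dots,\abs{F_0}-1}$, consistent with \cref{def:m-W}.
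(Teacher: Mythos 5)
Your proposal is correct and takes essentially the same route as the paper's proof: decompose $P$ into rows covered uniquely by members of $\F$ at unique rotations, encode this data as an alternating word located uniquely in the underlying alternating De Bruijn cycle, use the same telescoping computation for the column offset, and invoke \cref{lem:m(W)-is-torus} to close up the torus (your uniqueness step is just a more explicit version of the paper's). The only minor remark is that you invoke condition (a) of \cref{lem:m(W)-is-torus}, i.e.\ $y\ge 2$, a hypothesis not literally in the theorem statement; the paper's proof has the same implicit reliance on one of the lemma's two conditions, so this is not a substantive difference.
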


    \begin{proof}

    Index $\W$ as  $\W = f_0 r_0 f_1 r_1 \cdots r_{v-1} f_{v}$. Let $\W_C$ be the alternating De Bruijn cycle $ \W = f_0 r_0 f_1 r_1 \cdots r_{v-1}$, where the subscripts are considered mod $2v$.

        Let $P \in \A^{(y+1)\times x}$. Each row of $P$ is a word in $\A^x$ and therefore covered exactly once in exactly one member of $\F$ (by the definition of an upfamily). $P$ is then uniquely identified by: a sequence of $y+1$ members of $\F$, a relative rotation that causes the top row of $P$ to be covered in the first $x$ elements of the corresponding cycle of $\F$ (the first cycle in the sequence, for the top row), and a sequence of $y$ relative rotations that cause each lower row of $P$ to appear in the first $x$ characters of the corresponding element of $\F$.

        Consider an arbitrary row of $P$, say $P_n$. Exactly one member of $\F$  covers $P_n$, say $F_{c_n}$. The sequence $c=(c_n)_{n=0}^{y}$ describes which elements of $\F$ cover the rows of $P$ and has exactly $y+1$ members.

        Further, there is exactly one integer $a_n \in \set{0 , \ldots , \abs{F_{c_n}}-1}$ such that $\sigma^{a_n}(F_{c_n})$ covers $P_n$ in its first $x$ characters. (Recall that by the definition of an upfamily, $\abs{F_0} = \abs{F_{c_n}}$.) We use the sequence $(a_n)_{n=0}^{y}$ to construct a sequence of differences, $b = (b_n)_{n=0}^{y-1}$, where $b_n = a_{n+1} - a_{n}$ for $n \in \set{0 , \ldots , y-1}$. Note that sequence $b$ has exactly $y$ members.

        Construct alternating word $w$ from sequence $c$ and sequence $b$: $w = F_{c_{0}}b_{0}F_{c_{1}}b_{1} \cdots b_{y-1}F_{c_{y}}$. Note that $w$ is an alternating word for alphabet pair $(\F,s)$ and has length $2y+1$ and therefore appears exactly once in $\W_C$ (up to equivalence mod $2v$) at, say, $f_i r_i \cdots r_{i+y-1} f_{i+y}$. By the construction of $m(\W)$, $P$ then is covered in $m(\W)$ in rows $\phi_i, \ldots, \phi_{i+y}$ (taking the indices mod $v$, which is possible due to \cref{lem:m(W)-is-torus}).

        Having uniquely identified the rows of $m(\W)$ in which $P$ is covered, we uniquely identify the columns of $m(\W)$ in which $P$ is covered.
        
        For each $n \in \set{0,\ldots,y}$, row $P_n$ is covered in $\phi_{i+n}$, which is $\sigma^{\sum_{j=0}^{i+n-1}r_j}(f_{i+n})$. 
         By the definition of $m(\W)$ and \cref{lem:m(W)-is-torus}, for every integer $q'$, the ${q'}^\text{th}$ row of $m(\W)$ is the $q:=(q'\mod v)^\text{th}$ row of $m(\W)$, so is $\phi_{q} = \sigma^{\sum_{i=0}^{q-1} r_i}(f_q)$, and we let $\phi_{q'} := \phi_q$.
        Independently of $n$, the difference between the rotation required to bring $P_0$ to the first character of $F_{c_0}$ and the rotation of $\phi_{i}$ then gives us the horizontal coordinate where the leftmost character of $P_n$ is covered: 
        \begin{align*}
            \sigma^{a_0-\sum_{j=0}^{i-1}r_j}(\phi_{i+n}) &= \sigma^{a_0-\sum_{j=0}^{i-1}r_j}(\sigma^{\sum_{j=0}^{i+n-1}r_j}(f_{i+n})) = \sigma^{a_0 + \sum_{j=i}^{i+n-1}r_j}(f_{i+n})\\
            &= \sigma^{a_0 + \sum_{j=0}^{n-1}b_j}(f_{i+n})= \sigma^{a_n}(f_{i+n}) = \sigma^{a_n}(F_{c_{i+n}}),
        \end{align*}
        so $P_n$ appears in $m(\W)$ starting at $(i+n,  
        a_0 - \sum_{j=0}^{i-1}r_j)$. Therefore $P$ is covered in exactly one place in $m(\W)$, with its top-left corner covered at $(i, a_0 - \sum_{j=0}^{i-1}r_j)$.
    \end{proof}

    For many values of $\A$, $w$, and $\ell$, both this construction and the construction in \cref{sec:results-constructed-uptori} produce an uptorus for $\A^{w \times \ell}$. For given $\A$, $w$, and $\ell$, the two constructions do not necessarily produce uptori of the same width and height. Additionally, for upfamilies with an even number of members, the upfamily-based construction produces uptori for $\A^{2 \times \ell}$. \cref{thm:CoversAllWxLMatricesExactlyOnce} does not guarantee the existence of uptori with $2\times \ell$ windows by itself (see \cref{rem:smallestupcycle}).

\section{Universal Partial Quasi-Families}\label{sec:quasifamilies}

    Non-perfect factors of the De Bruijn graph (equivalently, a relaxation of De Bruijn families where the cycles may have different lengths) have been studied \cite{ABMPS23,Etzion1988}.
    
    \begin{defn}[De Bruijn Quasi-Family]\label{def:dbquasifamily}
    A \emph{De Bruijn quasi-family} (also called a factor of the De Bruijn graph \cite{ABMPS23,Etzion1988}) for $\A^{\ell}$ is a set $\mathcal{S}$ of cyclic words over $\A$ such that each word in $\A^{\ell}$ is covered by exactly one element of $\mathcal{S}$, and is covered exactly once in that element. 
    \end{defn}

    The elements of a De Bruijn quasi-family are not necessarily all the same length. 
    While the cycle lengths in a De Bruijn family must exceed the word length $n$ (as otherwise $0^n$ would either never be covered or be covered more than once), this is not true of quasi-families. For example

    \begin{center}
        \begin{multicols}{2}
            $\set{00001, 01011, 001111}$
        
            $\set{001, 000011, 0101111}$
        \end{multicols}
    \end{center}
    are both De Bruijn quasi-families for $\set{0,1}^4$. 
        In the second example above, the cyclic word $001$ covers the words $0010$, $0100$, and $1001$.

    We generalize this concept and consider sets of cyclic partial words.
    
    \begin{defn}[Universal Partial Quasi-Family]\label{def:upquasifamily}
        A \emph{universal partial quasi-family} (or \emph{UPQF}) for $\A^{\ell}$ is a set $\mathcal{S}$ of cyclic partial words over $\A$ such that every word in $\A^{\ell}$ is covered by exactly one element of $\mathcal{S}$, and is covered exactly once in that element.
    \end{defn}
    
    In contrast to a universal partial family, the elements of a universal partial quasi-family are not necessarily all the same length.
    
    Like upfamilies, universal partial quasi-families can sometimes be constructed by slicing an upcycle. For example, take the same upcycle, given in \cref{sec:upfamilies}, \cref{eq-upcycle}, that was used to produce \cref{ex:upfamily}. 
    In order for a UPQF to be created, the upcycle must be cut into at least two cycles. Placing cuts at indices $0$ and $8$ in the upcycle from \cref{ex:upfamily}, we obtain
    \begin{equation*}\label{upcycle}|001\diam110\diam|003\diam112\diam021\diam130\diam023\diam132\diam201\diam310\diam203\diam312\diam221\diam330\diam223\diam332\diam
    \end{equation*}
    which produces the following $2$-element UPQF for $\set{0,1,2,3}^4$:
    \begin{align*}
        \mathcal{F} = \{\,
        & 001\diam110\diam, \\
        & 003\diam112\diam021\diam130\diam023\diam132\diam201\diam310\diam203\diam312\diam221\diam330\diam223\diam332\diam\}.
    \end{align*}

    The choice of starting index matters. The following set is not a UPQF, as it contains $32\diam2$ in both elements:
    \begin{align*}
        \mathcal{S} = \{\,
        & 223\diam332\diam, \\
        & 001\diam110\diam003\diam112\diam021\diam130\diam023\diam132\diam201\diam310\diam203\diam312\diam221\diam330\diam\}.
    \end{align*}
    If we assume a cut is already at index $0$, there are seven more indices that are divisible by $8$ where a cut can be placed, so there are $2^7 = 128$ different combinations of cuts. We found computationally that 42 of these combinations of cuts result in UPQFs. Additionally, all of the combinations where the members all had equal lengths were upfamilies.

    Next we show that each element of a UPQF can be said to have \emph{diamondicity} in the same way that upcycles do, but different elements of a UPQF do not necessarily have the same diamondicity.
    \begin{prop}\label{prop:upfamilydiamondicity}
        Let $\mathcal{S}$ be a universal partial quasi-family for $\mathcal{A}^\ell$, and let $u \in \mathcal{S}$. If $|u|>\ell$, then there is an integer $d_u$ such that every $\ell$-window in $u$ has $d_u$ wildcards. The integer $d_u$ is called the \emph{diamondicity} of $u$.
    \end{prop}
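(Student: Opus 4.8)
The plan is to reduce the statement to the known fact that upcycles have well-defined diamondicity and then re-run that argument in the more permissive setting of a single quasi-family element. The only property of $u$ that we may use is inherited from the family: since $\mathcal{S}$ is a UPQF for $\A^\ell$, every word of $\A^\ell$ is covered \emph{at most once} by the windows of $u$ (exactly once across all of $\mathcal{S}$, hence at most once within $u$ alone). So I would isolate the following self-contained claim: if $u$ is a cyclic partial word with $\abs{u} > \ell$ in which no word of $\A^\ell$ is covered more than once by its length-$\ell$ windows, then all of its windows contain the same number of $\diam$'s. This is essentially the content of \cite[Lemma 14]{chen2017universal} and \cite[Theorem 4.1]{Goeckner2018}, except that ``exactly once'' has been weakened to ``at most once,'' so the real task is to check that the diamondicity argument survives this weakening.

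First I would set up the bookkeeping. Writing $\delta_j = 1$ if $u_j = \diam$ and $0$ otherwise, and letting $d_i$ be the number of $\diam$'s in the window starting at index $i$, a one-line computation gives the telescoping identity $d_{i+1} - d_i = \delta_{i+\ell} - \delta_i$, since a symbol leaves the window at index $i$ while a symbol enters at index $i+\ell$. Hence the diamondicity is well-defined if and only if $\delta_{i+\ell} = \delta_i$ for every $i$; that is, the set of $\diam$-positions is invariant under the cyclic shift by $\ell$ on $\mathbb{Z}_{\abs{u}}$. This reduces the Proposition to showing that a failure of this shift-invariance forces two distinct windows to cover a common word.

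For the covering count it is convenient to identify the window starting at $i$ with the subcube $S_i \subseteq \A^\ell$ of words it covers, so that $\abs{S_i} = \abs{\A}^{d_i}$. The ``at most once'' hypothesis says exactly that the subcubes $S_i$ are pairwise disjoint, equivalently that the windows are pairwise incompatible as partial words. I would then pass to the de Bruijn graph on $\A^{\ell-1}$: window $i$ contributes precisely the set $E_i$ of edges running from the length-$(\ell-1)$ prefix subcube $V_i$ to the suffix subcube $V_{i+1}$, and disjointness of the $S_i$ becomes disjointness of the $E_i$. Running around the cycle, the $V_i$ form a closed sequence of subcubes linked by complete de Bruijn bicliques, and the goal becomes showing that the dimensions $e_i = \log_{\abs{\A}}\abs{V_i}$, and hence the $d_i$, are constant.

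The hard part is closing this last step, and I expect the main obstacle to be that the argument must be genuinely global rather than local. One might hope that whenever $\delta_{i+\ell}\neq\delta_i$ the adjacent windows $W_i,W_{i+1}$ already share a covered word, but this fails: their overlap is simultaneously realizable only when the common middle $u_{i+1}\cdots u_{i+\ell-1}$ is compatible with both end conditions, which need not hold, so a repeated word may surface only between far-apart windows. The decisive input should therefore be a counting argument on the subgraph traced out by the $E_i$, in the spirit of \cite{chen2017universal}, comparing for each letter $x\in\A$ the number of covered words beginning with $x$ against the per-window contributions $\abs{\A}^{d_i}$ and $\abs{\A}^{d_i-1}$. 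The subtlety, and the crux of the whole proof, is that the upcycle version of this count rests on $u$ covering \emph{every} word, which fixes the totals; here $u$ covers only a subset, so these totals are no longer pinned down, and I anticipate needing the disjointness of the $E_i$ (not merely a degree balance) to recover the constraint and force the dimensions to agree.
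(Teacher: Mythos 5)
Your reduction is where the proof breaks. You replace the hypothesis ``$u$ belongs to a UPQF'' by the weaker, purely local hypothesis ``no word of $\A^\ell$ is covered more than once by the windows of $u$,'' and then aim to prove well-defined diamondicity from that alone. That weakened claim is false. Take $\A=\set{0,1,2}$, $\ell=3$, and the cyclic partial word $u=01{\diam}020$ of length $6$. Its six windows cover $\set{010,011,012}$, $\set{100,110,120}$, $\set{002,102,202}$, $\set{020}$, $\set{200}$, $\set{001}$ --- twelve distinct words, each covered exactly once --- yet three windows contain one $\diam$ and three contain none. So no amount of counting on the de Bruijn graph can close the gap you flag at the end of your proposal: the statement you reduced to simply does not hold, and your own observation that the totals ``are no longer pinned down'' is precisely the symptom. (You also leave the crux unproved even under your reduction, so the argument is incomplete as well as misdirected.)

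What the ``at most once'' reduction discards is the \emph{at-least-once-somewhere-in-the-family} half of the UPQF definition, and that is exactly what the paper's proof uses; the argument is short and needs no global counting. Suppose $u_i=\diam$ but $u_{i+\ell}$ is a letter, say $0$, and let $v$ be any total word covered by $u_{i+1}\cdots u_{i+\ell-1}$. Every word $cv$ with $c\in\A$ is covered at position $i$ of $u$, and $v0$ is covered at position $i+1$. The word $v1$ must be covered somewhere in the family; the character preceding that occurrence covers some $cv$, and uniqueness of coverage of $cv$ forces that occurrence to be position $i$ of $u$ itself, so $v1$ would have to be covered at position $i+1$ as well --- impossible since $u_{i+\ell}=0$. (In the example above, $021$ can never be covered by any other cycle without re-covering one of $002,102,202$, which is why $01{\diam}020$ cannot occur in any UPQF.) This yields $u_i=\diam\Rightarrow u_{i+\ell}=\diam$; since $i\mapsto i+\ell$ is a bijection of $\mathbb{Z}_{\abs{u}}$, the set of $\diam$-positions is invariant under the shift by $\ell$, and your (correct) telescoping identity $d_{i+1}-d_i=\delta_{i+\ell}-\delta_i$ then finishes the proof. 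If you want a self-contained hypothesis on $u$ alone, ``covered at most once'' must be supplemented by the requirement that every word not covered by $u$ be coverable by cycles disjoint from $u$ --- at which point you are back to the family-level argument.
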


    \begin{proof}
        Let $\mathcal{S}$ be a universal partial quasi-family for $\A^{\ell}$. Let $u$ be an element of $\mathcal{S}$ where $|u|>\ell$. We claim that if $u_i=\diam$, then $u_{i+\ell}=\diam$ (where, as usual, subscripts are considered modulo $\abs{u}$).

        To justify our claim, let $v$ be a (total) word in $\A^{\ell-1}$ that is covered by $u_{i+1}\cdots u_{i+\ell-1}$. The words $\set{cv: c\in\A}$ are covered (in $u$) starting at $u_i$. Suppose that $u_{i+\ell}$ is \emph{not} a $\diam$. Without loss of generality, let $u_{i+\ell}=0$, so $v0$ is covered in $u$ starting at $u_{i+1}$. Note that $v1$ must be covered in some cycle $x\in\mathcal{S}$ (but not necessarily in $u$). Look at the character $c$ before $v1$ in $x$. Then $cv$ is covered there and at $u_i$, which must be the same place in the same cycle $u=x$ to avoid double-coverage. So $v1$ and $v0$ must both be covered at $u_{i+1}$, implying that $w_{i+\ell}=\diam$.
    \end{proof}

    \begin{rem}
        Note that the above proposition is a generalization of Proposition 4.5 from \cite{Goeckner2018}. Our proof is similar to the proof there. Our proposition applies to universal partial families, which are just universal partial quasi-families where each element has the same length. It is easy to show that, if $\mathcal{S}$ is a nontrivial upfamily and $\ell\geq2$, then every cycle's length is greater than $\ell$. The word $0^{\ell}$ must be covered in some member of an upfamily $\mathcal{S}$. The cycle where $0^{\ell}$ is covered must have length greater than $\ell$; otherwise $0^{\ell}$ would be covered at any starting position of the cycle (or else the cycle would have length 1, which is impossible because the word $01$ must be covered in a cycle with the same length). Therefore, \cref{prop:upfamilydiamondicity} applies to any cycle in a nontrivial upfamily with $\ell\geq2$. 
    \end{rem}

    In the next section we prove a construction for an uptorus without well-defined diamondicity from a carefully chosen quasi-family.

\section{Uptori Constructed from Lifted Upfamilies}\label{sec:liftedupfamilies}

    In \cref{lem:DefinedDiamondicity} we noted that uptori constructed as $m(u,s)$ from an upcycle $u$ and De Bruijn cycle $s$ have well-defined diamondicity. Some uptori constructed using upfamilies and alternating De Bruijn cycles also have well-defined diamondicity. Using the idea of ``lifting'' a cyclic partial word, however, it is possible to construct an uptorus that does not have well-defined diamondicity. 

    Lifting a cyclic partial word relies on the existence of particular perfect necklaces. An \emph{$(a, n, t)$-perfect necklace} $v$ is a cyclic word over $\set{0,1,\ldots, a-1}$ such that, for every $j \in [t]$, each word of $\set{0,\ldots,a-1}^n$ is $v_i \cdots v_{i+n-1}$ for a unique $i \in [t\cdot a^n]$ with $i \equiv j \mod t$ \cite{ALVAREZ201648, Fillmore2023}. Every $(a,n,t)$-perfect necklace has length $t\cdot a^n$. We extend the definition of lifts of upcycles from \cite{Fillmore2023} to lift cyclic partial words that have one $\diam$ in every $n$-window.

    \begin{thm}[Lifts of Cyclic Partial Words]\label{thm:upqf-lift}
        Let $u$ be a cyclic partial word over $\A$ $(a = \abs{\A})$ whose length is at least $n$. Let $S$ be the set of words in $\A^n$ covered by $u$. Suppose $u$ covers each word in $S$ exactly once, and has $1$ diamond in every $n$-window. Let $k$ be the total number of diamonds in $u$, so $k=\abs{u}/n=a^{n-1}/n$. Let $v$ be the $(a,1,k)$-perfect necklace $v= 0^k 1^k \cdots (a-1)^k$. Then replacing the diamonds of $u^a$ with the letters of $v$ in order yields a cyclic total word $w$ that also covers each word in $S$ exactly once. We call $w$ a \emph{lift} of $u$.
    \end{thm}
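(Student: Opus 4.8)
The plan is to make the lift completely explicit and then reduce the covering claim to the covering property of $u$ window by window. First I would pin down the diamond positions. Since $u$ has exactly one $\diam$ in every $n$-window and windows have length $n$, consecutive diamonds are exactly $n$ apart, so $\abs{u}=kn$ and the diamonds sit at $p_0, p_0+n, \ldots, p_0+(k-1)n$. The same spacing survives the seams of $u^a$: the gap from the last diamond of one copy to the first diamond of the next is $\abs{u}-(k-1)n = n$, so $u^a$ also has exactly one $\diam$ per $n$-window (cyclically), and hence every length-$n$ window of the lift $w$ contains exactly one formerly-diamond position. The perfect necklace $v=0^k1^k\cdots(a-1)^k$ has length $ak$, matching the $ak$ diamonds of $u^a$; reading the diamonds in order, the $(ck+j)$-th diamond lies in copy $c$ and is replaced by $v_{ck+j}=\lfloor(ck+j)/k\rfloor = c$. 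Thus the crucial concrete description is: \emph{in copy $c$ of $u^a$, every $\diam$ is replaced by the single letter $c$.}

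Next I would organize the windows of $w$. Write each of the $\abs{w}=a\abs{u}$ window start positions uniquely as $c\abs{u}+i$ with $c\in\A$ and $i\in\set{0,\ldots,\abs{u}-1}$, and let $W_{c,i}$ be the corresponding length-$n$ window. For a fixed offset $i$, let $T_i$ be the template $u_i\cdots u_{i+n-1}$, with its single $\diam$ at offset $\delta(i)\in\set{0,\ldots,n-1}$, and let $w_i(\alpha)$ denote $T_i$ with that $\diam$ set to $\alpha$. Away from the diamond, $W_{c,i}$ agrees with $T_i$, so $W_{c,i}=w_i(\alpha)$ for whichever letter $\alpha$ fills the diamond. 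The one point requiring care — and the step I expect to be the main obstacle — is the window that straddles a seam of $u^a$. If $i+\delta(i)<\abs{u}$ the diamond sits in copy $c$ and is filled with $c$, giving $W_{c,i}=w_i(c)$; but if $i+\delta(i)\ge\abs{u}$ the window crosses into copy $c+1$ and its diamond is filled with $c+1 \bmod a$, giving $W_{c,i}=w_i(c+1 \bmod a)$. Since $n\le\abs{u}$, a window meets at most one seam, so these are the only cases.

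The payoff is uniform across both cases: with $i$ fixed, as $c$ ranges over $\A$ the fill letter ranges over all of $\A$ (either through $c\mapsto c$ or through the bijection $c\mapsto c+1 \bmod a$). Hence $\set{W_{c,i}:c\in\A}=\set{w_i(\alpha):\alpha\in\A}$, the $a$ words covered by $T_i$, each spelled exactly once. Finally I would take the union over $i$: because $u$ covers each word of $S$ exactly once, the templates $T_i$ cover pairwise disjoint blocks of $S$ whose union is $S$, so the $a\abs{u}=\abs{S}$ windows of $w$ spell precisely the words of $S$, each exactly once. As $w$ is a total word, this is exactly the assertion that $w$ covers each word of $S$ exactly once, completing the argument.
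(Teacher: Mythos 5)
Your proof is correct and follows essentially the same route as the paper's: both rest on the key observation that the $a$ words covered at position $i$ of $u$ are spelled out, once each, at the $a$ positions congruent to $i$ modulo $\abs{u}$ in $w$, and then count. The paper asserts that observation in one line and finishes by pigeonhole ($\abs{w} = a\abs{u} = \abs{S}$), whereas you prove it in full --- including the seam-crossing case where a window's diamond falls in the next copy of $u$, which the paper glosses over --- and conclude directly from the partition of $S$ into the blocks covered by the individual windows of $u$.
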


    \begin{proof}
        The $a$ words of $S$ covered in position $i$ of $u$ are covered in positions $i, i+\abs{u}, \ldots , i+(a-1)\abs{u}$ in $w$. Moreover, $\abs{S}=a\abs{u}$ because each $n$-window has one diamond, and we assumed that no word is repeated. The length of $w$ is $\abs{w} = \abs{u^a} = a\abs{u} = \abs{S}$, so each word in $S$ is covered in $w$ exactly once.
    \end{proof}

    \begin{example}
        Consider alphabet $\A = \set{0,1,2,3}$ (so $a=4$) and cyclic partial word $u = 003\diam112\diam$ for $\A$. Let $n=4$. 
        Every $4$-window of $u$ contains one $\diam$. The cyclic partial word $u$ has two windows so $k=2$. It covers $2 \times 4 \times 4^1 = 32$ distinct words of length 4. Choose $\delta = 1$ and construct cyclic partial word $u^a = 003\diam112\diam003\diam112\diam003\diam112\diam003\diam112\diam$.

        Construct $(a, \delta, k)$-perfect-necklace $v = 00 11 22 33$ and replace each of the $\diam$'s in $u^a$ with the corresponding symbol from $v$, producing $w=00301120003111210032112200331123$, which is a cyclic word that also covers the same 32 distinct words.
    \end{example}

    All currently known upcycles have a diamondicity of 1, and all currently known universal partial quasi-families also have a diamondicity of 1. If there are upcycles with diamondicity greater than 1, then a more general version of the following theorem would hold. We restrict ourselves to the diamondicity 1 case here.

    \begin{thm}[Uptori Without Well-Defined Diamondicity]\label{thm:uptorus-no-diamondicity}
        Suppose that $\F$ is a UPQF for $\A^x$, $s = \set{0, 1, \ldots , x - 1}$, $y \geq 2$, $c \geq x$, and $\F$ meets all of the following conditions:
    
        \begin{enumerate}[(a)]
            \item Every member of $\F$ has length of either $c$ or $c\abs{\A}$.
            \item At least one member of $\F$ has length $c$ and at least one member of $\F$ has length $c\abs{\A}$.
            \item Every member of $\F$ with length $c$ has one $\diam$ in every $x$-window and at least one member of $\F$ with length $c\abs{\A}$ has one $\diam$ in every $x$-window.
        \end{enumerate}
        Then \cref{thm:m(W)-covers-exactly-once} constructs an uptorus for $\A^{(y+1)\times x}$ without well-defined diamondicity.
    \end{thm}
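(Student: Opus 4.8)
The plan is to convert the quasi-family $\F$ into a genuine upfamily by \emph{lifting} its short members, apply \cref{thm:m(W)-covers-exactly-once} to that upfamily, and then exhibit two subarrays of the resulting uptorus with different numbers of $\diam$'s.

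First I would homogenize the lengths. By hypothesis (a) every member of $\F$ has length $c$ or $c\abs{\A}$, and by (c) every length-$c$ member has exactly one $\diam$ in each $x$-window; since each such member covers its words exactly once (it belongs to a UPQF) and has length $c \ge x$, \cref{thm:upqf-lift} with $n = x$ applies and replaces it by a total word of length $c\abs{\A}$ covering exactly the same set of length-$x$ words, each exactly once. Leaving the length-$c\abs{\A}$ members untouched, let $\F'$ be the resulting set. Every member of $\F'$ now has length $c\abs{\A}$, so the lengths are uniform. Because lifting preserves each member's covered set together with its single covering multiplicity, and the members of $\F$ already partition $\A^x$ by the UPQF property, $\F'$ still covers every word in $\A^x$ exactly once, in exactly one member; hence $\F'$ is an upfamily for $\A^x$ in the sense of \cref{def:upfamily}.

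Next I would feed $\F'$ into the earlier construction: taking an unrolled alternating De Bruijn cycle $\W$ of order $2y+1$ on the pair $(\F', s')$ with rotation alphabet $s' = \set{0,\ldots,c\abs{\A}-1}$, which yields a torus by \cref{lem:m(W)-is-torus} since $y \ge 2$, \cref{thm:m(W)-covers-exactly-once} gives an uptorus $m(\W)$ for $\A^{(y+1)\times x}$. Finally I would show this uptorus has no well-defined diamondicity. Each row $\phi_n$ of $m(\W)$ is a rotation of some member $f_n \in \F'$, so every $x$-window of that row carries exactly the diamondicity $d_{f_n}$ of $f_n$ --- namely $0$ if $f_n$ is a lifted (total) member and $1$ if $f_n$ is one of the retained length-$c\abs{\A}$ members guaranteed by (c) to have one $\diam$ per $x$-window. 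Thus any $(y+1)\times x$ subarray spanning rows $\phi_i,\ldots,\phi_{i+y}$ contains $\sum_{n=i}^{i+y} d_{f_n}$ diamonds, independent of its column position. Since $\W$ is an alternating De Bruijn cycle it contains every alternating word of order $2y+1$, in particular one whose $\F'$-entries are all equal to a fixed member; choosing that member to be the diamond-carrying member from (c) produces a subarray with $y+1$ diamonds, while choosing a lifted member (which exists by (b)) produces a subarray with $0$ diamonds. As $y+1 \ge 3 > 0$, these counts differ, so the diamondicity of $m(\W)$ is not well-defined.

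The main obstacle is the homogenization step: verifying that lifting the short members while keeping the long ones genuinely yields an upfamily, i.e. that the covered-set partition of $\A^x$ and the exactly-once multiplicity survive the lift and that the lengths really coincide at $c\abs{\A}$. Once $\F'$ is known to be an upfamily, the uptorus is immediate from \cref{thm:m(W)-covers-exactly-once}, and the failure of well-defined diamondicity reduces to the easy observation that homogeneous runs of members of differing diamondicity occur as windows in any alternating De Bruijn cycle.
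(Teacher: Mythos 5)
Your proposal is correct and follows essentially the same route as the paper: lift the length-$c$ members via \cref{thm:upqf-lift} to obtain an upfamily, apply \cref{thm:m(W)-covers-exactly-once} with an unrolled alternating De Bruijn cycle over the rotation alphabet $\set{0,\ldots,c\abs{\A}-1}$, and then compare a homogeneous run of a lifted (diamond-free) member against a homogeneous run of a length-$c\abs{\A}$ member with one $\diam$ per $x$-window to get subarrays with $0$ versus $y+1$ diamonds.
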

    
    \begin{proof}
        Let the members of $\F$ with length $c$ be $F_p = \set{F_{p_0}, \ldots, F_{p_a}}$ and the members of $\F$ with length $c\abs{\A}$ be $F_q = \set{F_{q_0}, \ldots, F_{q_b}}$ for $a \geq 0$ and $b \geq 0$ (i.e., each set has at least one element). First, lift each member of $F_p$ with length $c$ (by \cref{thm:upqf-lift}), giving $F'_p = \set{F'_{p_0}, \ldots, F'_{p_a}}$. The set $F'_p \cup F_q$ is then a universal partial family for $\A^x$. Next, let $\W$ be an unrolled alternating De Bruijn cycle of order $2y+1$ for alphabet pair $(F'_p \cup F_q, \set{0, \ldots, c\abs{\A}-1})$. Then construct uptorus $m(\W)$ from $\W$ using \cref{thm:m(W)-covers-exactly-once}. 
        
        An uptorus constructed this way does not have well-defined diamondicity. Without loss of generality, assume $F_{q_0}$ has diamondicity 1, and let $w = F_{q_0} 0 F_{q_0} 0 \cdots 0 F_{q_0}$ with length $2y+1$. Each $(y+1) \times x$ subarray covered by rows that correspond to $w$ then has $y+1$ $\diam$'s.

        Let $v = F'_{p_0} 0 F'_{p_0} 0 \cdots 0 F'_{p_0}$ with length $2y+1$. Because each element of $F_p$ was lifted, each $(y+1) \times x$ subarray covered by rows that correspond to $v$ has no $\diam$'s.
    \end{proof}

    \begin{example}
    
    The following set is a universal partial quasi-family for $\set{0,1,2,3}^4$:
        \begin{align*}
            \mathcal{F} = \{\,
            &001\diam110\diam003\diam112\diam021\diam130\diam023\diam132\diam,\\
            &201\diam310\diam,\\ 
            &203\diam312\diam,\\
            &221\diam330\diam,\\ 
            &223\diam332\diam\}.
        \end{align*}
        Note that it has alphabet size 4, four members of length 8, and one member of length 32. Using \cref{thm:upqf-lift}, we lift the four short members, producing a universal partial family:
        \begin{align*}
            \mathcal{F}_2 = \{\,
            &001\diam110\diam003\diam112\diam021\diam130\diam023\diam132\diam,\\
            &20103100201131012012310220133103,\\
            &20303120203131212032312220333123,\\
            &22103300221133012212330222133303,\\
            &22303320223133212232332222333323\,\}.
        \end{align*}
        Using an unrolled alternating De Bruijn cycle $\W$ with alphabet pair $(\F_2, \set{0, \ldots, 31})$ and order $2y+1 = 2(2)+1 = 5$, we can construct an uptorus for $\set{0,1,2,3}^{3 \times 4}$ using \cref{thm:m(W)-covers-exactly-once} that has dimensions of 128,000 rows by 32 columns, which is impractical to reproduce here.
    \end{example}

\section{Open Problems}\label{sec:open}

For which triples of integers $(a,w,\ell)$ do there exist uptori for $\set{0,\ldots,a-1}^{w \times \ell}$? Our constructions guarantee the existence of uptori for some triples, based on existence of upcycles, De Bruijn cycles, upfamilies, and alternating De Bruijn cycles, so this question may be partially addressed through research on these one-dimensional objects. We know, however, that uptori exist for subarray size $2\times 2$, and such uptori cannot be constructed by the methods we describe, so this approach will not fully answer the question. Which other types of uptori exist that we have not described yet?

Similarly, how many upmatrices exist that cannot be constructed using the methods described in this paper? In \cref{sec:results-comp-upmatrices} we show that some exist, but we do not know how many. 

It is possible to identify such upmatrices for a given alphabet and subarray size by a naive search of the whole solution space. But, as the examples in \cref{sec:results-comp-upmatrices} suggest, the size and shape of the solution space depend on both the number and configuration of the wildcards it contains. What are the time and space complexity of the naive algorithm for arbitrary alphabets and subarray sizes? Further, is there an algorithm to identify non-constructible upmatrices with better time and space complexity than the naive algorithm?

Some slicings of ``alphabet multiplier'' upcycles produce upfamilies but not all. For what family sizes and what distribution of cuts does a slicing produce an upfamily from a given upcycle? Is there an algorithm that identifies family sizes and cuts for a given upcycle in polynomial time? (See \cref{conj:upfamily-pizza-slice}.) 
It would be interesting to determine sufficient conditions for the existence of upfamilies, analogous to those for upcycles and perfect factors, as upfamilies generalize both.

The existence of higher dimensional De Bruijn tori was posited in \cite{Cock1988}, which presents a construction that ``generalizes to $d$-dimensional arrays, using De Bruijn-Good cyclic sequences, and shifting cyclically in up to $d-1$ dimensions.'' Higher dimensional De Bruijn tori were explored in \cite{IVANYI2011475}. Are there higher dimensional analogues of upmatrices or uptori? If so, are there constructions analogous to the ones presented in this paper that produce them?

\section{Acknowledgments}

We would like to thank the Mason Experimental Geometry Lab (MEGL) for supporting this project and Charles Landreaux for collaboration in early stages of the research. The third author is supported in part by Simons Foundation Grant MP-TSM-00002688.

\bibliographystyle{plainurl}
\bibliography{refs}

\end{document}